\newcommand{\PP}{\mathbb{P}}
\newcommand{\E}{\mathbb{E}}
\newcommand{\R}{\mathbb{R}}
\newcommand{\N}{\mathbb{N}}
\newtheorem{theorem}{Theorem}
\newtheorem{lemma}{Lemma}
\newtheorem{definition}{Definition}
\newtheorem{proposition}{Proposition}
\newtheorem{assumption}{Assumption}
\begin{document}
\title{Adaptive wavelet estimation of a compound Poisson process}
\author{C\'eline Duval\footnote{GIS-CREST and CNRS-UMR 8050, 3, avenue Pierre Larousse, 92245 Malakoff Cedex, France.}}
%\author[rvt]{C\'eline Duval\corref{cor1}}
%\ead{celine.duval@ensae.fr}
%\cortext[cor1]{Corresponding author}
%\address[rvt]{GIS-CREST and CNRS-UMR 8050, 3, avenue Pierre Larousse, 92245 Malakoff Cedex, France.}
\date{}
\maketitle

\begin{abstract}
We study the nonparametric estimation of the jump density of a compound Poisson process from the discrete observation of one trajectory over $[0,T]$. We consider the microscopic regime when the sampling rate $\Delta=\Delta_T\rightarrow0$ as $T\rightarrow\infty$. We propose an adaptive wavelet threshold density estimator and study its performance for the $L_p$ loss, $p\geq 1$, over Besov spaces. The main novelty is that we achieve minimax rates of convergence for sampling rates $\Delta_T$ that vanish with $T$ at arbitrary polynomial rates. More precicely, our estimator attains minimax rates of convergence provided there exists a constant $K\geq 1$ such that the sampling rate $\Delta_T$ satisfies $T\Delta_T^{2K+2}\leq 1.$ If this condition cannot be satisfied we still provide an upper bound for our estimator. The estimating procedure is based on the inversion of the compounding operator in the same spirit as Buchmann and Gr\"ubel (2003).
\end{abstract}
\noindent \textbf{AMS 2000 subject classifications:} 62G99, 62M99, 60G50.\\
\noindent \textbf{Keywords:} Compound Poisson process, Discretely observed random process, Decompounding, Wavelet density estimation.

\section{Introduction}

\subsection{Statistical setting}

Let $R$ be a standard homogeneous Poisson process with intensity $\vartheta$ in $(0,\infty)$, we define the compound Poisson process $X$ as \begin{align*}
X_t&=\sum_{i=1}^{R_t}\xi_i,\ \ \ \ \ t\geq0
\end{align*}
where the $\big(\xi_i\big)$ are independent and identically distributed random variables and independent of the Poisson process $R$.

Assume that we have discrete observations of the process $X$ over $[0,T]$ at times $i\Delta$ for some $\Delta >0$
\begin{equation}\label{eq data}
\big(X_\Delta,\ldots,X_{\lfloor T\Delta^{-1}\rfloor\Delta}\big).\end{equation} We focus on the \emph{microscopic regime}, namely $$\Delta=\Delta_T \rightarrow 0 \ \ \ \ \ \mbox{ as }\ T\rightarrow\infty$$ and work under the following assumption. \begin{assumption}\label{ass f}
The law of the $\xi_i$ has density $f$ which is absolutely continuous with respect to the Lebesgue measure. 
\end{assumption} \noindent We denote by $\mathcal{F}(\R)$ the space of densities with respect to the Lebesgue measure supported by $\R$. We investigate the nonparametric estimation of the density $f$ on a compact interval $\mathcal{D}$ included in $\R$ from the observations \eqref{eq data}. To that end we use wavelet threshold density estimators and study their rate of convergence uniformly over Besov balls for the following loss function \begin{align}\label{eq loss}  \big(\E\big[\|\widehat {f}-f\|_{L_p(\mathcal{D})}^p\big]\big)^{1/p},\end{align} where $\widehat{f}$ is an estimator of $f$, $p\geq1$ and $$\|f\|_{L_p(\mathcal{D})}=\Big(\int_\mathcal{D} |f(x)|^pdx\Big)^{1/p}.$$ We also denote by $\|f\|_{L_p(\R)}$ the usual $L_p$ norm for $p\geq1$$$\|f\|_{L_p(\R)}=\Big(\int_\R|f(x)|^pdx\Big)^{1/p}.$$ We do not assume the intensity $\vartheta$ to be known: it is a nuisance parameter.

By Assumption \ref{ass f}, on the event $\{X_{i\Delta}-X_{(i-1)\Delta}=0\}$ no jump occurred between $(i-1)\Delta$ and $i\Delta$ and the increment $X_{i\Delta}-X_{(i-1)\Delta}$ gives no information on $f$. In the microscopic regime many increments are zero, therefore to estimate $f$ we focus on the nonzero increments and denote by $N_T$ their number over $[0,T]$. In that statistical context different difficulties arise. First the sample size $N_T$ is random. Second on the event $\{X_{i\Delta}-X_{(i-1)\Delta}\ne0\}$, the increment $X_{i\Delta}-X_{(i-1)\Delta}$ is not necessarily a realisation of the density $f$. Indeed even if $\Delta$ is small there is always a positive probability that more than one jump occurred between $(i-1)\Delta$ and $i\Delta$. Conditional on $\{X_{i\Delta}-X_{(i-1)\Delta}\ne0\}$, the law of $X_{i\Delta}-X_{(i-1)\Delta}$ has density given by (see Proposition \ref{PropDefOperator} in Section \ref{Section results} below) \begin{align}\label{eq operator 1}\mathbf{P}_\Delta[f](x)=\sum_{m=1}^\infty\PP\big(R_\Delta=m\big|R_\Delta\ne0\big)f^{\star m}(x), \ \ \ \ \mbox{ for }x\in\R,\end{align} where $\star $ is the convolution product and $f^{\star m}=f\star\ldots\star f$, $m$ times.

Adaptive estimators of the density $f$ in that statistical context already exists. Under the condition $T\Delta_T\leq 1$ (or $T\Delta_T^2\leq 1$ if $f$ is smooth enough), they attain minimax rates of convergence over Sobolev spaces for the $L_2$ loss (see Bec and Lacour \cite{Lacour}, Comte and Genon-Catalot \cite{Comte09,Comte11} and Figueroa-L\'opez \cite{Lopez}). In this paper we try to answer the following questions.
\begin{itemize}
\item [\textbf{i)}] Is it possible to construct an estimator of $f$ when $\Delta_T$ decays slowly to 0, for instance when $\Delta_T$ vanishes polynomially slowly with $T$.
\item [\textbf{ii)}] Is it possible to construct adaptive wavelet estimators that attain, over Besov spaces for the $L_p$ loss defined in \eqref{eq loss}, the classical minimax rates of convergence of the experiment where we observe $T$ independent realisations of $f$.
\end{itemize}
Without loss of generality, assuming $T$ is an integer if we observe $T$ independent realisations of a density $f$ of regularity $s$ measured with the $L_\pi$ norm, $\pi>0$, it is possible to achieve the  minimax rates of convergence for the $L_p$ loss --up to constants and logarithmic factors-- which is of the form $$T^{-\alpha(s,\pi,p)}$$  where $\alpha(s,\pi,p)\leq 1/2$ (see for instance Donoho \textit{et al.} \cite{Donoho96} and \eqref{eq alpha} hereafter). When the process $X$ is continuously observed over $[0,T]$, we have $R_T$ independent and identically distributed realisations of $f$. Moreover for $T$ large enough, $R_T$ is of the order of $T$. That is why we want compare the performance of estimators of $f$ in the regime $\Delta_T\rightarrow0$ with the classical minimax rate we would have if $X$ were continuously observed.

\subsection{Our Results\label{section our res}}

We build our estimator of $f$ using equation \eqref{eq operator 1} and proceed in two steps. The first step is the computation of the inverse of the operator $f\rightarrow \mathbf{P}_\Delta[f]$. The inverse takes the form $$\mathbf{P}_\Delta^{-1}[\nu]=\sum_{m\geq1}^\infty a_m(\vartheta,\Delta_T)\nu^{\star m},\ \ \ \ \nu\in \mathcal{F}(\R)$$ where the $\big(a_m(\vartheta,\Delta_T)\big)$ are explicit (see Proposition \ref{PropDefOperator} below). They depend on the intensity $\vartheta$ and can be estimated. We take advantage of \begin{align}\label{eq approx intro}f\approx\mathbf{L}_{\Delta,K}\big[\mathbf{P}_\Delta[f]\big],\end{align} where $\mathbf{L}_{\Delta,K}$ is the Taylor expansion of order $K$ in $\Delta$ of $\mathbf{P}_\Delta^{-1}$. It depends only on $\big({\mathbf{P}_\Delta}[f]^{\star m},m=1,\ldots,K+1\big)$. That step can be referred as decoumpounding as introduced in Buchmann \textit{et al.} \cite{Buchmann}.

The second step consists in estimating the densities $\mathbf{P}_\Delta[f]^{\star m}$, for $m=1,\ldots,K+1$. For that we use the $N_T$ nonzero increments which are independent and with density $\mathbf{P}_\Delta[f]$. The difficulty here is that $N_T$ is random. In Theorem \ref{thm Poisson} we show that conditional on $N_T$ wavelet threshold estimators of $\mathbf{P}_\Delta[f]^{\star m}$ attain a rate of convergence --up to logarithmic factors-- in $N_T^{-\alpha(s,\pi,p)}$. For $T$ large enough we prove (see Proposition \ref{prop control rate} in Section \ref{Section proof}) that $N_T$ concentrates around a deterministic value of the order of $T$, giving an unconditional rate of convergence in $T^{-\alpha(s,\pi,p)}$. We inject those estimators into $\mathbf{L}_{\Delta,K}$, defined in \eqref{eq approx intro}, and obtain an estimator of $f$ that we call \textit{estimator corrected at order $K$.}

The study of the rate of convergence of the estimator corrected at order $K$ requires to control two distinct error terms. A deterministic one due the first step which is the error made when approximating $f$ by $\mathbf{L}_{\Delta,K}\big[\mathbf{P}_\Delta[f]\big]$ in \eqref{eq approx intro}. And a statistical one due to the replacement of the ${\mathbf{P}_\Delta}[f]^{\star m}$ by estimators in the second step. The deterministic error decreases when $K$ increases, then the idea is to choose $K$ sufficiently large for the deterministic error term to be negligible in front of the statistical one. We give in Theorem \ref{thm Poisson} an upper bound for the rate of convergence of the estimator corrected at order $K$ which is in --up to constants and logarithmic factors--
$$\max\{T^{-\alpha(s,\pi,p)},\Delta_T^{K+1}\}.$$ It decreases with $K$ and if there exists $K_0$ such that \begin{align}\label{eq cond K}T\Delta_T^{2K_0+2}\leq 1,\end{align} since $\alpha(s,\pi,p)\leq 1/2$ the estimator corrected at order $K_0$ attains the minimax rates of convergence. It follows that for every $\Delta_T$ polynomially decreasing with $T$, it is possible to exhibit $K_0$ such that \eqref{eq cond K} is valid and the estimator corrected at order $K_0$ provides a positive answer to \textbf{i)} and \textbf{ii)}. If no $K$ enables to verify condition \eqref{eq cond K}, Theorem \ref{thm Poisson} provides an upper bound for the rate of convergence of the estimator corrected at order $K$, in that case the estimator still provide a positive answer to \textbf{i)}.

In the case of a compound Poisson processes, the results of the present paper generalise to some extend those of Bec and Lacour \cite{Lacour}, Comte and Genon-Catalot \cite{Comte09,Comte11} and Figueroa-L\'opez \cite{Lopez}. This is discussed in further details in Section \ref{section discuss}. In Section \ref{Section results} we give the main results of the paper. We properly define wavelet functions and Besov spaces used for the estimation before having a complete construction of the estimator corrected at order $K$. Then we give an upper bound for its rate of convergence for the $L_p$ loss defined in \eqref{eq loss}, $p\geq1$, uniformly over Besov balls. A numerical example illustrates the behavior of the estimator corrected at order $K$ in Section \ref{Section Num Ex}. Finally Section \ref{Section proof} is dedicated to the proofs.

\

The model of this paper is central in many application fields \textit{e.g.} statistical physics (see Moharir \cite{MOHARIR}), biology (see Huelsenbeck \textit{et al.} \cite{Huelsenbeck}), financial series or mathematical insurance (see Scalas \cite{Scalas}). It is well adapted to study phenomena where random independent events occur at random times. For instance, in insurance failure theory these events can model the claims that insurance companies have to pay to the subscribers. The insurer's surplus at a given time $t$ can be modeled by the following process
\begin{align*}
K(t)=K_0+kt-X_t,
\end{align*} where $K_0$ is the capital of the company at time 0, the second term is a deterministic trend corresponding to the average income received from the subscribers and $X$ is a compound Poisson process modeling the insurance claims occurring at random times with random amount of money at stake. It is the Cram\'er-Lundberg model; see Embrechts \textit{et al.} \cite{Embrechts} or Scalas \cite{Scalas}. Compound Poisson processes can also model the changes of an asset price in finance; see Masoliver \textit{et al.} \cite{CTRW}.

\section{Main results}\label{Section results}

\subsection{Besov spaces and wavelet thresholding}
To estimate the densities $\big({\mathbf{P}_\Delta}[f]^{\star m},m=1,\ldots,K+1\big)$ we use wavelet threshold density estimators and study their performance uniformly over Besov balls. In this paragraph we reproduce some classical results on Besov spaces, wavelet bases and wavelet threshold estimators (see Cohen \cite{Cohen}, Donoho \textit{et al.} \cite{Donoho96} or Kerkyacharian and Picard \cite{KP00}) that we use in the next sections.

\subsubsection*{Wavelets and Besov spaces}
We describe the smoothness of a function with Besov spaces on $\mathcal{D}$. We recall here some well documented results on Besov spaces and their connection to wavelet bases (see Cohen \cite{Cohen}, Donoho \textit{et al.} \cite{Donoho96} or Kerkyacharian and Picard \cite{KP00}). Let $\big(\psi_{\lambda}\big)_\lambda$ be a regular wavelet basis adapted to the domain $\mathcal{D}$. The multi-index $\lambda$ concatenates the spatial index and the resolution level $j=|\lambda|$. Set $\Lambda_j:=\{\lambda,|\lambda|=j\}$ and $\Lambda=\cup_{j\geq -1}\Lambda_j$, for $f$ in $L_p(\R)$ we have
\begin{align}\label{eq fdecomp1}
f&=\sum_{j\geq-1}\sum_{\lambda\in\Lambda_j}\langle f,\psi_\lambda\rangle\psi_\lambda,
\end{align} where $j=-1$ incorporates the low frequency part of the decomposition and $\langle .,\rangle$ denotes the usual $L_2$ inner product. We define Besov spaces in term of wavelet coefficients, for $s>0$ and $\pi \in (0,\infty]$ a function $f$ belongs to the Besov space $\mathcal{B}^s_{{\pi} \infty}(\mathcal{D})$ if the norm
\begin{align}\label{eq besov}
\|f\|_{\mathcal{B}^s_{{\pi} \infty}(\mathcal{D})}&:=\underset{j\geq-1}{\sup}2^{j(s+1/2-1/\pi)}\Big(\sum_{\lambda\in\Lambda_j}|\langle f,\psi_\lambda\rangle|^\pi\Big)^{1/\pi}
\end{align} is finite, with usual modifications if $\pi=\infty$.

We need additional properties on the wavelet basis $\big(\psi_{\lambda}\big)_\lambda$, which are listed in the following assumption.
\begin{assumption}\label{Ass} For $p\geq1$,
\begin{itemize}
\item We have for some $\mathfrak{C}\geq 1$ $$\mathfrak{C}^{-1}2^{|\lambda|(p/2-1)}\leq \|\psi_\lambda\|_{L_p(\mathcal{D})}^p\leq \mathfrak{C} 2^{|\lambda|(p/2-1)}.$$
\item For some $\mathfrak{C}>0$, $\sigma >0$ and for all $s\leq\sigma$, $J\geq0$, we have \begin{align}\label{eq ass1} \big\|f-\sum_{j\leq J}\sum_{\lambda\in\Lambda_j} \langle f,\psi_\lambda\rangle\psi_\lambda\big\|_{L_p(\mathcal{D})}\leq \mathfrak{C}2^{-Js}\|f\|_{\mathcal{B}^s_{{\pi} \infty}(\mathcal{D})}.\end{align}
\item If $p\geq 1$, for some $\mathfrak{C}\geq 1$ and for any sequence of coefficients $\big(u_\lambda\big)_{\lambda\in\Lambda}$, \begin{align}\label{eq ass2}\mathfrak{C}^{-1}\Big\|\sum_{\lambda\in\Lambda}u_\lambda\psi_\lambda\Big\|_{L_p(\mathcal{D})}\leq \Big\|\Big( \sum_{\lambda\in\Lambda}|u_\lambda\psi_\lambda|^2\Big)^{1/2}\Big\|_{L_p(\mathcal{D})} \leq\mathfrak{C}\Big\|\sum_{\lambda\in\Lambda}u_\lambda\psi_\lambda\Big\|_{L_p(\mathcal{D})}.\end{align}
\item For any subset $\Lambda_0\subset\Lambda$ and for some $\mathfrak{C}\geq 1$ \begin{align}\label{eq ass3} \mathfrak{C}^{-1} \sum_{\lambda\in\Lambda_0}\|\psi_\lambda\|_{L_p(\mathcal{D})}^p\leq \int_{\mathcal{D}}\Big(\sum_{\lambda\in\Lambda_0}|\psi_\lambda(x)|^2\Big)^{p/2}\leq \mathfrak{C}\sum_{\lambda\in\Lambda_0}\|\psi_\lambda\|_{L_p(\mathcal{D})}^p .\end{align}
\end{itemize}
\end{assumption}

Property \eqref{eq ass1} ensures that definition \eqref{eq besov} of Besov spaces matches the definition in terms of linear approximation. Property \eqref{eq ass2} ensures that $\big(\psi_{\lambda}\big)_\lambda$ is an unconditional basis of $L_p$ and \eqref{eq ass3} is a super-concentration inequality (see Kerkyacharian and Picard \cite{KP00} p. 304 and p. 306).

\subsubsection*{Wavelet threshold estimator} Let $(\phi,\psi)$ be a pair of scaling function and mother wavelet that generate a basis $\big(\psi_{\lambda}\big)_\lambda$ satisfying Assumption \ref{Ass} for some $\sigma>0$. We rewrite \eqref{eq fdecomp1}
\begin{align*}
f&=\sum_{k\in \Lambda_0}\alpha_{0k}\phi_{0k}+\sum_{j\geq 1}\sum_{k\in\Lambda_j}\beta_{jk}\psi_{jk},
\end{align*} where $\phi_{0k}(\bullet)=\phi(\bullet-k)$ and $\psi_{jk}(\bullet)=2^{j/2}\psi(2^j\bullet-k)$ and \begin{align*} \alpha_{0k}&=\int \phi_{0k}(x)f(x)dx\\ \beta_{jk}&=\int \psi_{jk}(x)f(x)dx.\end{align*} For every $j\geq 0$, the set $\Lambda_j$ has cardinality $2^j$ and incorporates boundary terms that we choose not to distinguish in the notation for simplicity. An estimator of a function $f$ is obtained when replacing the $(\alpha_{0k})$ and $(\beta_{jk})$ by estimated values. In the sequel we uses $(\gamma_{jk})$ to design either $(\alpha_{0k})$ or $(\beta_{jk})$ and $(g_{jk})$ for the wavelet functions $(\phi_{0k})$ or $(\psi_{jk})$.

We consider classical hard threshold estimators of the form
\begin{align*}
\widehat{f}(\bullet)&= \sum_{k\in \Lambda_0}\widehat{\alpha_{0k}}\phi_{0k}(\bullet)+\sum_{j= 1}^J\sum_{k\in\Lambda_j}\widehat{\beta_{jk}}\mathds{1}_{\big\{|\widehat{\beta_{jk}}|\geq\eta\big\}}\psi_{jk}(\bullet),\end{align*} where $\widehat{\alpha_{0k}}$ and $\widehat{\beta_{jk}}$ are estimators of $\alpha_{0k}$ and $\beta_{jk}$, $J$ and $\eta$ are respectively the resolution level and the threshold, possibly depending on the data. Thus to construct $\widehat{f}$ we have to specify estimators $(\widehat{\gamma_{jk}})$ of the $(\gamma_{jk})$ and the coefficients $J$ and $\eta$.

\subsection{Construction of the estimator}

 Assume that we have $\lfloor T\Delta^{-1}\rfloor$ discrete data at times $i\Delta$ for some $\Delta >0$ of the process $X$
\begin{equation*}
\big(X_\Delta,\ldots,X_{\lfloor T\Delta^{-1}\rfloor\Delta}\big).
\end{equation*} Introduce the increments
\begin{align*}
\mathbf{D}^\Delta X_i=X_{i\Delta}-X_{(i-1)\Delta},\ \ \ \mbox{ for } i=1,\dots,\lfloor T\Delta^{-1}\rfloor,\end{align*}
 where $X_0=0$. They are independent and identically distributed since $X$ is a compound Poisson process. Define
\begin{align*}S_1&=\inf\big\{j, \mathbf{D}^\Delta X_j\ne0\big\}\wedge T\\S_i&=\inf\big\{j>S_{i-1}, \mathbf{D}^\Delta X_j\ne0\big\}\wedge T\ \ \ \mbox{for }i\geq1,\end{align*} where $S_i$ is the random index of the $i$th jump and
$$N_T=\sum_{i=1}^{\lfloor T\Delta^{-1}\rfloor}\mathds{1}_{\{\mathbf{D}^\Delta X_i\ne0\}}
$$ the random number of nonzero increments observed over $[0,T]$. By Assumption \ref{ass f}, on the event $\{\mathbf{D}^\Delta X_i=0\},$ no jump occurred between $(i-1)\Delta$ and $i\Delta$. In the microscopic regime when $\Delta=\Delta_T\rightarrow 0$ as $T$ goes to infinity many increments are null and convey no information on $f$, hence for the estimation of $f$ we focus on the nonzero ones $$\big(\mathbf{D}^\Delta X_{S_1},\ldots,\mathbf{D}^\Delta X_{S_{N_T}}\big).$$

\begin{proposition}\label{PropDefOperator}
The distribution of the increment $\mathbf{D}^\Delta X_{S_1}$ has density with respect to the Lebesgue measure given by
\begin{align*}\mathbf{P}_\Delta[f]&=\sum_{m=1}^\infty p_m(\Delta) f^{\star m},
\end{align*}
 where $$p_m(\Delta)=\PP\big(R_\Delta=m|R_\Delta\ne0\big)=\frac{1}{e^{\vartheta\Delta}-1}\frac{\big(\vartheta\Delta\big)^m}{m!}.$$
 Let $\Delta_0$ be such that $$\sum_{m=2}^\infty \frac{\big(\vartheta\Delta_0\big)^{m-2}}{m!}\leq1.$$ For $\Delta\leq\Delta_0$, we have that $$1- \vartheta\Delta\leq p_1(\Delta)\leq1.$$
\end{proposition}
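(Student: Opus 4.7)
The plan is to identify the law of $\mathbf{D}^\Delta X_{S_1}$ as the conditional law of $X_\Delta$ given $R_\Delta\ne 0$, and then to compute that law by conditioning on $R_\Delta$. Since $X$ has stationary independent increments, the $(\mathbf{D}^\Delta X_i)$ are i.i.d.\ copies of $X_\Delta=\sum_{i=1}^{R_\Delta}\xi_i$. Under Assumption~\ref{ass f}, each convolution $f^{\star m}$ is absolutely continuous and therefore assigns zero mass to $\{0\}$, so $\{X_\Delta=0\}=\{R_\Delta=0\}$ almost surely. On the event $\{N_T\geq 1\}$, whose complement has probability $(e^{-\vartheta\Delta})^{\lfloor T\Delta^{-1}\rfloor}$, the standard ``first success'' argument for i.i.d.\ sequences yields that $\mathbf{D}^\Delta X_{S_1}$ is distributed as $X_\Delta$ conditionally on $\{R_\Delta\ne 0\}$. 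Then conditioning on $R_\Delta$ and using that $\sum_{i=1}^{m}\xi_i$ has density $f^{\star m}$ for $m\geq 1$ yields, for any Borel $A\subset\R$,
\begin{align*}
\PP\bigl(\mathbf{D}^\Delta X_{S_1}\in A\mid N_T\geq 1\bigr)=\sum_{m=1}^{\infty}\frac{\PP(R_\Delta=m)}{1-e^{-\vartheta\Delta}}\int_A f^{\star m}(x)\,dx,
\end{align*}
and inserting the Poisson weights $\PP(R_\Delta=m)=e^{-\vartheta\Delta}(\vartheta\Delta)^m/m!$ produces the stated $p_m(\Delta)$. The function $\mathbf{P}_\Delta[f]=\sum_{m\geq 1}p_m(\Delta)f^{\star m}$ is then a probability density since $\sum_{m\geq 1}p_m(\Delta)=1$.

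For the bounds on $p_1(\Delta)$, I would factor $\vartheta\Delta$ out of $e^{\vartheta\Delta}-1=\vartheta\Delta+\sum_{m\geq 2}(\vartheta\Delta)^m/m!$ to get
\begin{align*}
p_1(\Delta)=\frac{\vartheta\Delta}{e^{\vartheta\Delta}-1}=\frac{1}{1+\vartheta\Delta\sum_{m\geq 2}(\vartheta\Delta)^{m-2}/m!}.
\end{align*}
The upper bound $p_1(\Delta)\leq 1$ is immediate since the denominator is at least $1$. For the lower bound, the defining inequality of $\Delta_0$ ensures $\sum_{m\geq 2}(\vartheta\Delta)^{m-2}/m!\leq 1$ for all $\Delta\leq\Delta_0$, so the denominator is at most $1+\vartheta\Delta$; combining with the elementary inequality $1/(1+x)\geq 1-x$ for $x\geq 0$ gives $p_1(\Delta)\geq 1-\vartheta\Delta$.

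No step presents any serious obstacle; the argument is essentially a direct computation. The only minor point of care is the cosmetic bookkeeping on the negligible event $\{N_T=0\}$, on which $S_1$ equals the conventional cap and $\mathbf{D}^\Delta X_{S_1}$ need not admit a density; restricting to $\{N_T\geq 1\}$ (whose complement has probability that decays exponentially in $T$) makes the statement exact.
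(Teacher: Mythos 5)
Your proposal is correct and follows essentially the same route as the paper: identify the law of $\mathbf{D}^\Delta X_{S_1}$ with the conditional law of $X_\Delta$ given $R_\Delta\ne0$, decompose by conditioning on $R_\Delta$, and for the bound on $p_1(\Delta)$ factor $e^{\vartheta\Delta}-1=\vartheta\Delta\bigl(1+\vartheta\Delta\sum_{m\geq2}(\vartheta\Delta)^{m-2}/m!\bigr)$ and use $1/(1+x)\geq 1-x$. Your extra care about the a.s.\ identity $\{X_\Delta=0\}=\{R_\Delta=0\}$ and the negligible event $\{N_T=0\}$ only makes explicit what the paper leaves implicit.
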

 It is straightforward to verify that the nonlinear operator $\mathbf{P}_\Delta$ is a mapping from $\mathcal{F}(\R)$ to itself. The observations $\big(\mathbf{D}^\Delta X_{S_i}\big)$ are realisations of the density $\mathbf{P}_\Delta[f]$ and by Proposition \ref{PropDefOperator} the weight $p_1(\Delta)\rightarrow1$ in the limit $\Delta=\Delta_T\rightarrow0$. It follows that for $\Delta_T$ small enough most of the $\big(\mathbf{D}^\Delta X_{S_i}\big)$ have distribution $f$. Then a naive method to estimate $f$ is to apply classical density estimators to the $\big(\mathbf{D}^\Delta X_{S_i}\big)$. That estimator requires a convergence condition on $\Delta_T$ to achieve minimax rate of convergence (see Theorem \ref{thm Poisson}). However we wish to construct an estimator that attains minimax rates of convergence with weaker conditions on $\Delta_T$.

We adopt the estimating strategy of section \ref{section our res} and construct an approximation of $f$.
\begin{lemma}\label{lem Inverse T}
The inverse $\mathbf{P}_\Delta^{-1}$ of $\mathbf{P}_\Delta$, such that for all densities $f$ in $\mathcal{F}(\R)$ if $ \mathbf{P}_\Delta[f]=\nu$ we have $\mathbf{P}_\Delta^{-1}[\nu]=f$, is given by
\begin{align*}
\mathbf{P}_\Delta^{-1}[\nu]= \frac{1}{\vartheta\Delta}\sum_{m=1}^\infty \frac{(-1)^{m+1}}{m}(e^{\vartheta\Delta}-1)^m\nu^{\star m}.
\end{align*}
\end{lemma}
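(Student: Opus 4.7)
The plan is to pass to the Fourier domain, where convolution becomes ordinary multiplication, so that the operator $\mathbf{P}_\Delta$ becomes the composition with a scalar analytic function and its inversion reduces to inverting that function.

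First, I would compute $\widehat{\mathbf{P}_\Delta[f]}$. Because $\widehat{f^{\star m}}(\xi)=\hat f(\xi)^m$, plugging in the explicit weights $p_m(\Delta)$ given in Proposition \ref{PropDefOperator} yields
\begin{equation*}
\widehat{\mathbf{P}_\Delta[f]}(\xi)=\frac{1}{e^{\vartheta\Delta}-1}\sum_{m=1}^\infty\frac{(\vartheta\Delta\,\hat f(\xi))^m}{m!}=\frac{e^{\vartheta\Delta\hat f(\xi)}-1}{e^{\vartheta\Delta}-1}.
\end{equation*}
So if $\nu=\mathbf{P}_\Delta[f]$ we obtain the pointwise identity $(e^{\vartheta\Delta}-1)\hat\nu(\xi)=e^{\vartheta\Delta\hat f(\xi)}-1$.

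Second, I would invert this identity. Setting $z(\xi)=(e^{\vartheta\Delta}-1)\hat\nu(\xi)$, I take the principal branch of $\log$ on both sides to get $\vartheta\Delta\,\hat f(\xi)=\log(1+z(\xi))$, and expand using $\log(1+z)=\sum_{m\geq1}(-1)^{m+1}z^m/m$. This yields
\begin{equation*}
\hat f(\xi)=\frac{1}{\vartheta\Delta}\sum_{m=1}^\infty\frac{(-1)^{m+1}}{m}(e^{\vartheta\Delta}-1)^m\hat\nu(\xi)^m,
\end{equation*}
which is precisely the Fourier transform of the series stated in the lemma, term by term (using $\widehat{\nu^{\star m}}=\hat\nu^m$). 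Inverting the Fourier transform delivers the formula for $\mathbf{P}_\Delta^{-1}[\nu]$.

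The main obstacle is justifying the analytic steps, and this dictates the smallness regime in which the formula truly makes sense. Since $\hat\nu$ is a characteristic function, $|\hat\nu(\xi)|\leq 1$, so $|z(\xi)|\leq e^{\vartheta\Delta}-1$. Provided $\vartheta\Delta<\log 2$ (which is guaranteed for $T$ large enough in the microscopic regime, and which is weaker than the $\Delta\leq\Delta_0$ condition stated in Proposition \ref{PropDefOperator}), the log-series converges absolutely and uniformly in $\xi$, so the principal branch is unambiguous and the term-by-term manipulations are legal. On the convolution side, the same bound gives $\sum_{m\geq1}\frac{1}{m\vartheta\Delta}(e^{\vartheta\Delta}-1)^m\|\nu^{\star m}\|_{L_1(\R)}<\infty$ since $\|\nu^{\star m}\|_{L_1(\R)}=1$, so the series defining $\mathbf{P}_\Delta^{-1}[\nu]$ converges in $L_1(\R)$ and its Fourier transform is the sum of the Fourier transforms. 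Comparing this sum with the closed form for $\hat f$ obtained above and using injectivity of the Fourier transform on $L_1(\R)$ concludes that $\mathbf{P}_\Delta^{-1}[\mathbf{P}_\Delta[f]]=f$, as claimed.
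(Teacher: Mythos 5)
Your proposal is correct and follows essentially the same route as the paper: Fourier transform turns $\mathbf{P}_\Delta$ into $\hat f\mapsto(e^{\vartheta\Delta\hat f}-1)/(e^{\vartheta\Delta}-1)$, which is inverted via the logarithm series under the smallness condition $\vartheta\Delta<\log 2$, and one concludes by Fourier inversion. Your added justification of the $L_1$ convergence of the convolution series and the injectivity of the Fourier transform is a welcome (slightly more careful) elaboration of the same argument.
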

\noindent To build the estimator corrected at order $K$ we use that $\mathbf{P}_\Delta^{-1}$ is a power series whose coefficients are equivalent to increasing powers of $\Delta$. Then $\mathbf{L}_{\Delta,K}$ the Taylor expansion of order $K$ in $\Delta$ of $\mathbf{P}_\Delta^{-1}$ is obtained by keeping the first $K+1$ terms of the inverse \begin{align}\label{eq LTaylor}\mathbf{L}_{\Delta,K}[\nu]=\frac{1}{\vartheta\Delta}\sum_{m=1}^{K+1} \frac{(-1)^{m+1}}{m}(e^{\vartheta\Delta}-1)^m\nu^{\star m},\ \ \ \nu \in \mathcal{F}(\R).\end{align} Next we construct wavelet threshold density estimators of the first $K+1$ convolution powers of ${\mathbf{P}_\Delta}[f]$ that will be plugged in \eqref{eq LTaylor}. Define
\begin{align}\label{eq est Coeffconvol}
\widehat{\gamma}^{(m)}_{jk}&=\frac{1}{N_{T,m}}\sum_{i=1}^{N_{T,m}}g_{jk} \Big(\mathbf{D}^{\Delta}_m X_{S_i}\Big)\ \ \ \ m\geq1,\end{align} where $N_{T,m}=\big\lfloor N_T/m \big\rfloor\geq 1$ for large enough $T$ and $$\mathbf{D}^{\Delta}_m X_{S_i}=\mathbf{D}^\Delta X_{S_i}+ \mathbf{D}^\Delta X_{S_{N_{T,m}+i}}+\dots+\mathbf{D}^\Delta X_{S_{(m-1)N_{T,m}+i}}.$$ The $\big(\mathbf{D}^\Delta X_{S_i}\big)$ are independent and identically distributed with density $\mathbf{P}_\Delta[f]$, thus the $\big(\mathbf{D}^{\Delta}_m X_{S_i}\big)$ are independent and identically distributed with density $\mathbf{P}_\Delta[f]^{\star m}$. Let $\eta>0$ and $J\in \N \setminus\{0\},$ define $\widehat{P_{\Delta,m}}$ the estimator of $\mathbf{P}_\Delta[f]^{\star m}$ over $\mathcal{D}$ \begin{align}\label{eq est convol}
\widehat{P_{\Delta,m}}(x)&=\sum_{k}\widehat{\alpha}_{0k}^{(m)}\phi_{0k}(x)+\sum_{j= 0}^J\sum_{k}\widehat{\beta}_{jk}^{(m)}\mathds{1}_{\big\{|\widehat{\beta}_{jk}^{(m)}|\geq \eta\big\}}\psi_{jk}(x),\ \ \ x\in\mathcal{D}.
\end{align}

\begin{definition}\label{def est f Poiss bis}
We define $\widetilde{f}^K_{T,\Delta}$ the estimator corrected at order $K$ for $K$ in $\N$ and $x$ in $\mathcal{D}$ as
 \begin{align}
\widetilde{f}^K_{T,\Delta}(x)&=\sum_{m=1}^{K+1}\frac{(-1)^{m+1}}{m}\frac{\big(e^{\widehat{\vartheta}_T\Delta}-1\big)^m}{ \widehat{\vartheta}_T\Delta}\widehat{P_{\Delta,m}}(x)\label{eq Est f Threshold Corr bis},
 \end{align} where \begin{align}\label{eq est tau0}\widehat{\vartheta}_T&=-\frac{1}{\Delta}\log(1-\widehat{p}_T)\end{align} and $$\widehat{p}_T=\frac{N_T}{\lfloor T\Delta^{-1}\rfloor}$$ is the empirical estimator of $p(\Delta)=\PP(R_\Delta=0)=1-e^{-\vartheta{\Delta}}.$
\end{definition} \noindent Lemma \ref{lem Inverse T} justifies the form of the estimator corrected at order $K$.

\subsection{Convergence rates}
We estimate densities $f$ which verify a smoothness property in term of Besov balls $$\mathcal{F}(s,{\pi},\mathfrak{M})=\big\{ f\in \mathcal{F}(\R), \|f\|_{\mathcal{B}^s_{{\pi} \infty}(\mathcal{D})}\leq\mathfrak{M}\big\},$$ where $\mathfrak{M}$ is a positive constant. We are interested in estimating $f$ on the compact interval $\mathcal{D}$, that is why we only impose that its restriction to $\mathcal{D}$ belongs to a Besov ball.

\begin{theorem}  \label{thm Poisson} We work under Assumptions \ref{ass f} and \ref{Ass}. Let $\sigma>s>1/\pi$, $p\geq1\wedge \pi$ and $\widehat{P_{\Delta_T,m}}$ be the threshold wavelet estimator of $\mathbf{P}_{\Delta_T}[f]^{\star m}$ on $\mathcal{D}$ constructed from $(\phi,\psi)$ and defined in \eqref{eq est convol}. Take $J$ such that $$2^JN_T^{-1}\log\big(N_T^{1/2}\big)\leq 1,$$  and $$\eta=\kappa N_T^{-1/2}\sqrt{\log\big(N_T^{1/2}\big)},$$ for some $\kappa>0$. Let
\begin{align}\label{eq alpha}\alpha(s,p,\pi)=\min\Big\{\frac{s}{2s+1},\frac{s+1/p-1/{\pi}}{2\big(s+1/2-1/{\pi}\big)}\Big\}.\end{align} \begin{enumerate} \item[1)] The estimator $\widehat{P_{\Delta_T,m}}$ verifies for large enough $T$ and sufficiently large $\kappa>0$\begin{align*}
\underset{\mathbf{P}_{\Delta_T}[f]^{\star m}\in\mathcal{F}(s,{\pi},\mathfrak{M})}{\sup}\big(\E \big[\big\|\widehat{P_{\Delta_T,m}} -\mathbf{P}_{\Delta_T}[f]^{\star m}\big\|_{L_p(\mathcal{D})}^p\big|N_T\big] \big)^{1/p}&\leq \mathfrak{C}  N_T^{-{\alpha(s,p,\pi)}},
\end{align*} up to logarithmic factors in $T$ and where $\mathfrak{C}$ depends on $s,\pi,p,\mathfrak{M},\phi,\psi$.

\item[2)] The estimator corrected at order $K$ $\widetilde{f}^K_{T,\Delta_T}$ defined in \eqref{eq Est f Threshold Corr bis} verifies for $T$ large enough and any positive constants $\underline{\mathfrak{T}}$ and $\overline{\mathfrak{T}}$
\begin{align*}
\underset{\vartheta\in [\underline{\mathfrak{T}},\overline{ \mathfrak{T}}]}{\sup}\underset{f\in\mathcal{F}(s,{\pi},\mathfrak{M})}{\sup}\big(\E \big[\big\| \widetilde{f}^K_{T,\Delta_T} -f\big\|_{L_p(\mathcal{D})}^p\big] \big)^{1/p}&\leq \mathfrak{C} \max\big\{T^{-{\alpha(s,p,\pi)}},\Delta_T^{K+1}\big\},
\end{align*} up to logarithmic factors in $T$ and where $\mathfrak{C}$ depends on $s,\pi,p,\mathfrak{M},\phi,\psi,\underline{\mathfrak{T}},$ $\overline{ \mathfrak{T}},K$.\end{enumerate}
\end{theorem}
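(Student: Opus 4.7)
The plan is to treat the two parts of the theorem separately, reducing Part 2 to Part 1 via a bias/variance decomposition of the corrected estimator.

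For Part 1, conditional on $N_T$ the blocks $\big(\mathbf{D}^\Delta_m X_{S_i}\big)_{i=1,\ldots,N_{T,m}}$ are i.i.d.\ with density $\mathbf{P}_\Delta[f]^{\star m}$, and $\widehat{P_{\Delta,m}}$ is the classical hard-threshold wavelet density estimator built from $N_{T,m}\asymp N_T/m$ samples. I would therefore reproduce the Kerkyacharian--Picard proof (cf.\ \cite{KP00} and \cite{Donoho96}): decompose the $L_p$ loss using Assumption \ref{Ass} into a low-resolution bias term, a tail term controlled by \eqref{eq ass1} once $J$ is chosen as specified, and a thresholded high-resolution term, which is split into the four regions (``large/small'' true coefficient, ``large/small'' empirical coefficient). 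The variance of $\widehat\gamma_{jk}^{(m)}$ is bounded by $\|\mathbf{P}_\Delta[f]^{\star m}\|_\infty/N_{T,m}$ (finite for $m\geq 1$ since one convolution regularises), and a Bernstein inequality together with the choice $\eta=\kappa N_T^{-1/2}\sqrt{\log N_T^{1/2}}$ drives the probabilities of the ``bad'' events to negligible terms once $\kappa$ is large enough. The exponent $\alpha(s,p,\pi)$ then comes out of the usual elbow computation between the dense and sparse zones.

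For Part 2, I would write the pivotal decomposition
\begin{align*}
\widetilde f^K_{T,\Delta_T}-f \;=\; \bigl(\widetilde f^K_{T,\Delta_T}-\mathbf{L}_{\Delta_T,K}[\mathbf{P}_{\Delta_T}[f]]\bigr) \;+\; \bigl(\mathbf{L}_{\Delta_T,K}[\mathbf{P}_{\Delta_T}[f]]-f\bigr),
\end{align*}
which isolates a statistical term and a deterministic one. The deterministic term is the Taylor remainder of $\mathbf{P}_{\Delta}^{-1}$ at order $K$: from Lemma \ref{lem Inverse T} it equals $(\vartheta\Delta_T)^{-1}\sum_{m\geq K+2}\frac{(-1)^{m+1}}{m}(e^{\vartheta\Delta_T}-1)^m\mathbf{P}_{\Delta_T}[f]^{\star m}$, and since $e^{\vartheta\Delta_T}-1\lesssim\Delta_T$ and each $\mathbf{P}_{\Delta_T}[f]^{\star m}$ is a probability density (hence $L_1$-norm $1$, and $L_p$-norm on $\mathcal D$ controlled uniformly in $m$ for $m\geq 2$ by Young's inequality), the series is bounded in $L_p(\mathcal{D})$ by $\mathfrak{C}\Delta_T^{K+1}$ uniformly for $\vartheta\in[\underline{\mathfrak T},\overline{\mathfrak T}]$. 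The statistical term will be split further as
\begin{align*}
\frac{1}{\vartheta\Delta_T}\sum_{m=1}^{K+1}\frac{(-1)^{m+1}(e^{\vartheta\Delta_T}-1)^m}{m}\bigl(\widehat{P_{\Delta_T,m}}-\mathbf{P}_{\Delta_T}[f]^{\star m}\bigr) \;+\; \bigl(\text{error from replacing }\vartheta\text{ by }\widehat\vartheta_T\bigr).
\end{align*}
Using $|(e^{\vartheta\Delta_T}-1)^m/(\vartheta\Delta_T)|\leq \mathfrak{C}\Delta_T^{m-1}$ and Part 1 applied to each $m\leq K+1$, the first sum is bounded conditionally on $N_T$ by $\mathfrak{C}\,N_T^{-\alpha(s,p,\pi)}$; for the second, a Taylor expansion in $\vartheta$ of $x\mapsto(e^{x\Delta_T}-1)^m/(x\Delta_T)$ reduces matters to controlling $|\widehat\vartheta_T-\vartheta|$, which by \eqref{eq est tau0} is governed by $|\widehat p_T-p(\Delta_T)|$ and thus by a standard Bernstein estimate on $N_T/\lfloor T\Delta^{-1}\rfloor$.

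To pass from the conditional rate in $N_T$ to an unconditional rate in $T$, I would invoke Proposition \ref{prop control rate}, which asserts that $N_T$ concentrates around a quantity of order $T$; on its complement, the rare event, a crude deterministic bound on $\widetilde f^K_{T,\Delta_T}$ (using $|\widehat P_{\Delta_T,m}|\lesssim 2^J$) combined with an exponentially small probability contributes negligibly. Verifying that $f\in\mathcal{F}(s,\pi,\mathfrak{M})$ implies $\mathbf{P}_{\Delta_T}[f]^{\star m}\in\mathcal{F}(s,\pi,\mathfrak{M}')$ for some $\mathfrak{M}'$ depending on $m,K,\overline{\mathfrak T}$ (needed to chain Part 1 inside Part 2) is routine: $\mathbf{P}_{\Delta_T}[f]=p_1(\Delta_T)f+O(\Delta_T)$ in Besov norm, and convolution with an $L_1$ density does not increase the Besov norm by more than its $L_1$ mass. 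The main technical obstacle is the simultaneous handling of the random denominator $\widehat\vartheta_T\Delta_T$ and the random weights inside $\mathbf{L}_{\Delta_T,K}$: one must show that these weights stay bounded with overwhelming probability and that the resulting product of random factors does not blow up the $L_p$ risk, which should be achieved by a truncation argument on the event $\{\widehat\vartheta_T\in[\underline{\mathfrak T}/2,2\overline{\mathfrak T}]\}$ together with the exponential deviation bound for $\widehat p_T$.
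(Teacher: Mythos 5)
Your proposal follows essentially the same route as the paper: Besov-ball stability of $\mathbf{P}_{\Delta}[f]^{\star m}$ under convolution via Young's inequality, Rosenthal/Bernstein bounds feeding the Kerkyacharian--Picard thresholding machinery for Part 1, and for Part 2 the same three-term split into the Taylor remainder of $\mathbf{P}_{\Delta}^{-1}$ (giving $\Delta_T^{K+1}$), the plug-in of the estimated convolution powers (giving $N_T^{-\alpha}$, deconditioned via the concentration of $N_T$ around a quantity of order $T$), and the error from replacing $\vartheta$ by $\widehat\vartheta_T$. The only substantive deviations are cosmetic --- you reprove KP00 rather than cite it, and you handle the random weights by truncation on a good event for $\widehat\vartheta_T$ where the paper uses a derivative bound on $G_m(x)=((1-x)^{-1}-1)^m/\log(1-x)$ plus Cauchy--Schwarz and Rosenthal --- though note that your parenthetical justification of $\|\mathbf{P}_{\Delta}[f]^{\star m}\|_\infty<\infty$ by ``one convolution regularises'' is not the right reason for $m=1$; the correct source is the Sobolev embedding from $s>1/\pi$, as in the paper.
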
 \noindent The proof of Theorem \ref{thm Poisson} is postponed to Section \ref{Section proof}. From a practical point of view when one computes the estimator $\widetilde{f}^K_{T,\Delta_T}$ from \eqref{eq data} the sample size is $N_T$, which is why in Theorem \ref{thm Poisson} we give the resolution level $J$ and the threshold $\eta$ as functions of $N_T$ instead of replacing $N_T$ by its deterministic counterpart. Explicit bound for $\kappa$ is given in Lemma \ref{lem BernConvol Poisson} hereafter.

In practice the values $T$ and $\Delta_T$ are imposed or chosen by the practitioner. Theorem \ref{thm Poisson} ensures that the estimator corrected at order $K$ attains the minimax rate $T^{-{\alpha(s,p,\pi)}}$ for the smallest $K$ such that $$\Delta_T=O\big(T^{-\frac{{\alpha(s,p,\pi)}}{K+1}}\big).$$ Since $\alpha(s,p,\pi)\leq 1/2$ it is sufficient to choose $K$ such that $$T\Delta_T^{2K+2}=O(1).$$ If $\Delta_T$ decays as a power of $T$ \textit{i.e.} if there exists $\delta>0$ such that for some $\mathfrak{C}>0$ $$\Delta_T\leq \mathfrak{C}T^{-\delta},$$
 it is always possible to find a correction level $K$ satisfying the previous constraint. The case $K=0$ corresponds to the uncorrected estimator; it is the naive estimator one would compute making the approximation $f\approx\mathbf{P}_\Delta[f]$. In that case we get a rate of convergence in $$\max\{T^{-{\alpha(s,p,\pi)}},\Delta_T\},$$ which attains the minimax rate if $T^{\alpha(s,p,\pi)}\Delta_T\leq1$. Since $\alpha(s,\pi,p)\leq 1/2$, it follows that the condition $T^{\alpha(s,p,\pi)}\Delta_T\leq1$ already improves on the condition $T\Delta_T^2\leq1$ of Bec and Lacour \cite{Lacour}, Comte and Genon-Catalot \cite{Comte09,Comte11} or Figueroa-L\'opez \cite{Lopez} (see Section \ref{section discuss} for comparison with other works).

\section{A numerical example\label{Section Num Ex}}

We illustrate the behaviour of the estimator corrected at order $K$ when $K$ increases and compare its performance with an oracle: the wavelet estimator we would compute in the idealised framework where all the jumps are observed
\begin{align*}
\widehat{f}^{Oracle}(x)=\sum_{k}\widehat{\alpha}_{0k}^{Oracle}\phi_{0k}(x)+\sum_{j= 0}^J\sum_{k}\widehat{\beta}_{jk}^{Oracle}\mathds{1}_{\big\{|\widehat{\beta}_{jk}^{Oracle}|\geq \eta\big\}}\psi_{jk}(x),\end{align*} where $$\widehat{\alpha}_{0k}^{Oracle}=\frac{1}{R_T}\sum_{i=1}^{R_T}\phi_{0k}(\xi_i)\ \ \ \mbox{and}\ \ \ \widehat{\beta}_{jk}^{Oracle}=\frac{1}{R_T}\sum_{i=1}^{R_T}\phi_{0k}(\xi_i),$$ $R_T$ being the value of the Poisson process $R$ at time $T$ and $(\xi_i)$ the jumps. The parameters $J$ and $\eta$ as well as the wavelet bases $(\phi,\psi)$ are the same as those used to compute the estimator corrected at order $K$.
We consider a compound Poisson process of intensity $\vartheta=1$ on $[0,T]$ and of compound law
\begin{align*}
f(x)=(1-a) f_1(x)+af_2(x)
\end{align*} where $f_1$ is the density of a Gaussian $\mathcal{N}(0,1)$ and $f_2$ of a Laplace with location parameter 1 and scale parameter 0.1, we take $a=0.05$.

\begin{figure}[H]
\begin{center}
 \includegraphics[scale=0.4]{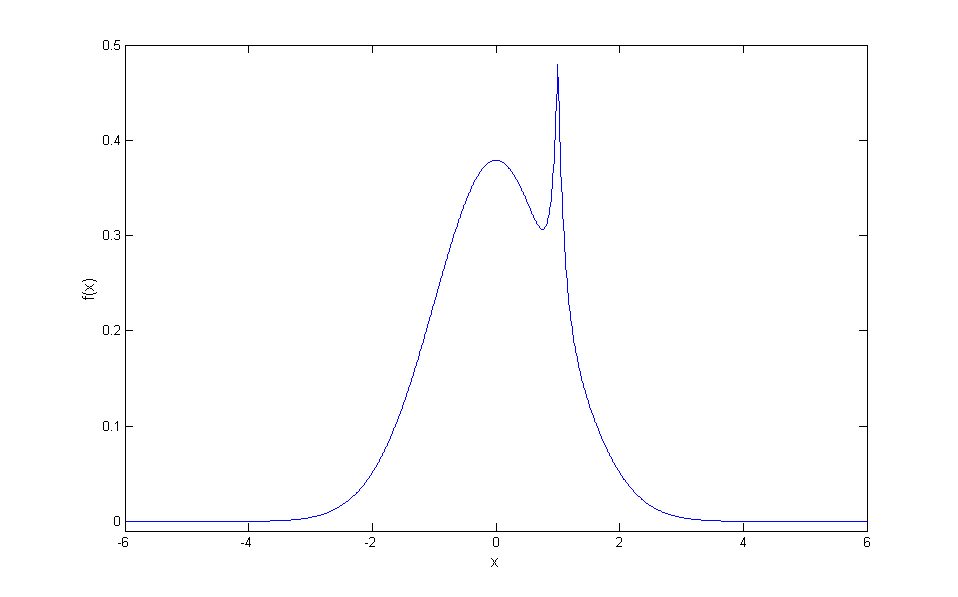}
 \caption{\footnotesize{Density $f:$ $f(x)=0.95f_1(x)+0.05f_2(x)$ $x\in [-6,6]$.}}\label{Fig loi}
 \end{center}
\end{figure}
 \noindent We estimate the mixture $f$ (see Figure \ref{Fig loi}) on $\mathcal{D}=[-6,6]$ with the estimator corrected at order $K$ for different values of $K$ and study the results with the $L_2$ error. We also compare them with the oracle $\widehat{f}^{Oracle}$. Wavelet estimators are based on the evaluation of the first wavelet coefficients, to perform those we use Symlets 4 wavelet functions and a resolution level $J=10$. Moreover we transform the data in an equispaced signal on a grid of length $2^L$ with $L=8$, it is the binning procedure (see H\"{a}rdle \textit{et al.} \cite{KerkPicTsyb} Chap. 12). The threshold is chosen as in Theorem \ref{thm Poisson}. The estimators we obtain take the form of a vector giving the estimated values of the density $f$ on the uniform grid $[-6,6]$ with mesh $0.01$. We use the wavelet toolbox of \textsf{Matlab}.

Figure \ref{Fig Comp1} represents the corrected estimator for $K=0$ and $K=1$ and the oracle. All the estimators are evaluated on the same trajectory. They manage to reproduce the shape of the density $f$. As expected the oracle looks better than the other two and the uncorrected ($K=0$) seems to make larger errors than the 1-corrected in estimating $f$. Figure \ref{Fig Comp2} represents for every values in $[-6,6]$ the absolute distance between those estimators --evaluated on the same trajectory-- and the true density $f$. Therefore it enables to determine in which area an estimator fails to estimate $f$ and to get an idea of the error made. The graphic was obtained after $M=1000$ Monte-Carlo simulations of each estimator and averaging the results. The uncorrected estimator is not as good as the estimator corrected at order 1. The oracle and the estimator corrected at order 1 seem to have similar performances. Each of the estimators makes larger errors around $1$ which is where the density $f$ is peaked.

\begin{figure}[H]
\begin{center}
\includegraphics[scale=0.4]{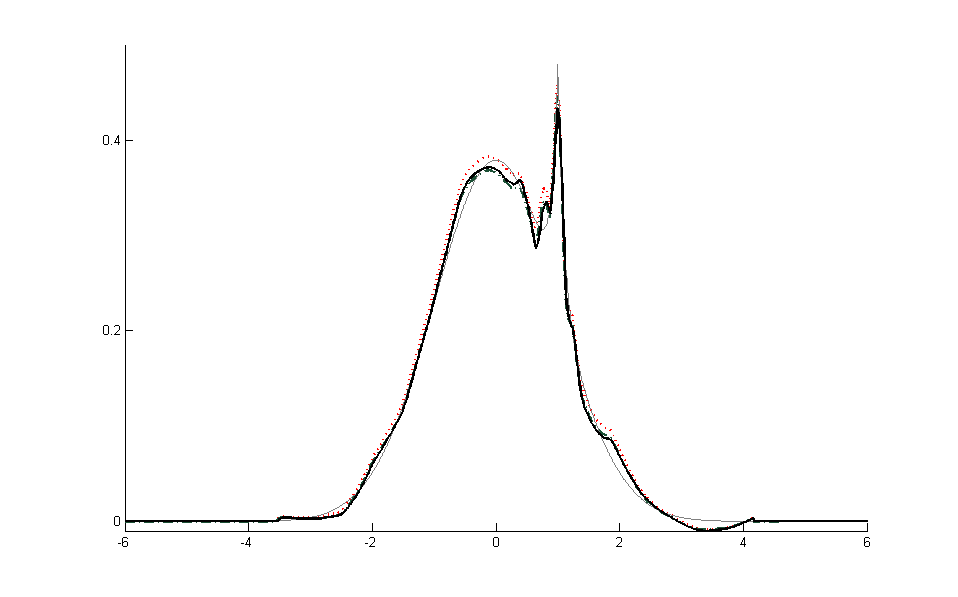}\hfill
\caption{Estimators of the density $f$ (plain grey) for $T=10000$ and $\Delta=0.1$: the uncorrected (dotted red), the 1-corrected (dashed green) and the oracle (plain dark). }\label{Fig Comp1}
\end{center}
\end{figure}

\begin{figure}[H]
\begin{center}
\includegraphics[scale=0.4]{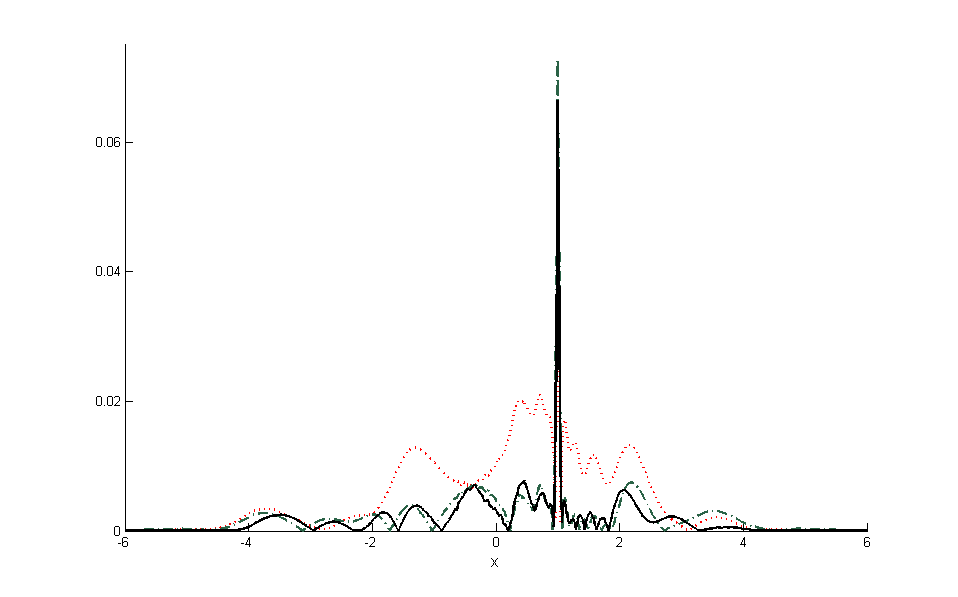}
\caption{Mean absolute error between the estimators and the true density (M=1000, $T=10000$ and $\Delta=0.1$): the uncorrected (dotted red), the 1-corrected (dashed green) and the oracle (plain dark). }\label{Fig Comp2}
\end{center}
\end{figure}
Evaluation of the $L_2$ errors enables to confirm the former graphical observation. We approximate the $L_2$ errors by Monte Carlo. For that we compute $M=1000$ times each estimator (for $T=10000$ and $\Delta=0.1$) and approximate the $L_2$ loss by$$\frac{1}{M}\sum_{i=1}^M\Big(\sum_{p=0}^{1200} \big(\widehat{f}(-6+0.01 p)-f(-6+0.01 p)\big)^2\times0.01\Big),$$ where $\widehat{f}$ is one of the estimators. For each Monte Carlo iteration the corrected and oracle estimators are evaluated on the same trajectory. The results are reproduced in the following table. \begin{center}
\begin{tabular}{|l|c|c|c|c|c|}
  \hline
  Estimator & Oracle & $K=0$ & $K=1$ & $K=2$ & $K=3$\\
  \hline
  $L_2$ error \small{($\times 10^{-4}$)} & 0.1117 & 0.1842 &   0.1353  &  0.1350   & 0.1350     \\
  \hline
  Standard deviation \small{($\times 10^{-5}$)} &  0.3495  & 0.4434  &   0.4363  &  0.4366   & 0.4366     \\
  \hline
\end{tabular}
\end{center} This confirms that there is an actual gain in considering the estimator corrected at order 1 instead of the uncorrected one. In the following table we estimate the $\big(p_m(\Delta)\big)$ defined in Proposition \ref{PropDefOperator}. \begin{center}
\begin{tabular}{|l|c|c|c|}
  \hline
  Estimated quantity & $\widehat{p_1}$ & $\widehat{p_2}$ & $\widehat{p_3}$\\
  \hline
  Estimation & 0.9508  &  0.0476  &  0.0016   \\
  \hline
  Standard deviation  &  0.0022  & 0.0022  &   0.0004   \\
  \hline
\end{tabular}
\end{center} It turns out that without the correction we estimate the density $f$ on a data set where $5\%$ of the observations are realisations of a law which is not $f$. This explains why it is relevant to take them into account when estimating $f$. Considering more than 1 or 2 corrections is unnecessary as the $L_2$ losses get stable afterwards. The $L_2$ loss of the oracle is strictly lower than the loss of the estimator corrected at order $K$, even for large $K$. That difference is explained by the fact that to estimate the $m$th convolution power we do not use $N_T$ data points but $N_{T,m}=\lfloor N_T/m\rfloor$. Therefore we do not loose in terms of rate of convergence, but we surely deteriorate the constants in comparison with the oracle.
Numerical results are consistent with the theoretical results of Theorem \ref{thm Poisson} where we proved a rate of convergence for the estimator corrected at order $K$ in $$\max\big\{T^{-{\alpha(s,p,\pi)}}, \Delta_T^{K+1}\big\}.$$ Since ${\alpha(s,p,\pi)}\leq1/2$, the rate decreases with $K$ and becomes stable once $\Delta_T^{2K+2}T\leq \mathfrak{C}$. In the numerical example we took $T=10000$ and $\Delta=0.1$ thus $T\Delta^4=1$ which explains why in the example we did not observe improvements when correcting with $K$ greater than 2.

\section{Discussion}\label{section discuss}
\subsection{Relation to other works}

A compound Poisson process is a pure jump L\'evy process and can be studied accordingly using L\'evy-Kintchine formula. Estimating the jump density $f$ is then equivalent to estimating the L\'evy measure since for compound Poisson process it is the product $\vartheta f(x)dx$. A possible estimation strategy in that case is to provide an estimator of the Fourier transform of the density. That strategy is quite different from the one introduced in this paper but is usually adopted when estimating the compound law of a compound Poisson process (see Figueroa-L\'opez \cite{Lopez}, Comte and Genon-Catalot \cite{Comte09,Comte11} or Bec and Lacour \cite{Lacour}).

The nonparametric estimation of the L\'evy measure from the discrete observation of a pure jump L\'evy process from high frequency data (which corresponds to our microscopic regime $\Delta_T\rightarrow0$) has been studied in great detail by Comte and Genon-Catalot \cite{Comte09,Comte11} and Figueroa-L\'opez \cite{Lopez}. In \cite{Lopez} the nonparametric estimation of the L\'evy density is made via a sieve estimator. They show that it attains minimax rates of convergence for the $L_2$ loss uniformly over a class of Besov functions for a sampling size $\Delta_T$ such that --with our notation-- $T\Delta_T\leq1$. Comte and Genon-Catalot \cite{Comte09,Comte11} construct an adaptive nonparametric estimator of the L\'evy measure, which attains minimax rates of convergence on Sobolev spaces for the $L_2$ loss for a sampling size $\Delta_T$ such that $T\Delta_T\leq 1$ (or $T\Delta_T^2\leq1$ under smoother assumptions). Bec and Lacour \cite{Lacour} obtained similar results when $T\Delta_T^2\leq1$. The statistical setting of \cite{Comte11} is more general since they estimate the L\'evy measure from observations of a L\'evy process with a Brownian component.

Our result is limited to the Poisson case contrary to Bec and Lacour \cite{Lacour}, Comte and Genon-Catalot \cite{Comte09} and Figueroa-L\'opez \cite{Lopez} who worked on the larger class of pure jump L\'evy processes. However in the case of a Poisson process we generalise them since we provide an adaptive density estimator which attains minimax rates of convergence, for the $L_p$ loss, $p\geq 1$, uniformly over Besov balls for regime where $\Delta_T$ is polynomially slow. If $\Delta_T$ decays even slower, for instance logarithmically in $T$, we still have an upper bound for the rate of convergence of our estimator.

\subsection{Possible extensions}

In this paper we give an adaptive minimax procedure for the estimation of the compound density of a compound Poisson process in the microscopic regime. The same estimation problem in an intermediate regime, namely when the process is observed at a sampling rate $\Delta>0$ fixed, has been studied in van Es \textit{et al.} \cite{VAN-ES07} and in the more general setting of L\'evy processes by Comte and Genon-Catalot \cite{Comte10} and Rei{\ss} \cite{Reiss}. van Es \textit{et al.} \cite{VAN-ES07} provide a consistent kernel density estimator of the compound density of a compound Poisson process of known intensity. They also focus on the nonzero increments for the estimation, but sidestep the problem of the random number of data $N_T$ by assuming that they have a sample of a given size.

The estimator corrected at order $K$ presented here should extend to intermediate regime where $\Delta_T\rightarrow\Delta_\infty<1$ and the rate of convergence given in Theorem \ref{thm Poisson} should generalise in $$\max\big\{T^{-{\alpha(s,p,\pi)}}, \Delta_\infty^{K+1}\big\}.$$ An improvement of the results would be the estimation of the compound density of renewal reward processes, or Continuous Time Random Walk, where it is no longer imposed that the elapsed time between jumps is exponentially distributed. Then the L\'evy property is lost, the increments of the renewal process are no longer independent nor identically distributed. An estimation strategy based on the L\'evy-Kintchine formula is not possible. Such processes enable to model random phenomena where the elapse time between events is not memoryless; they have many applications for instance in finance (see Meerschaert \textit{et al.} \cite{Meerschaert06}), in biology (see Fedotov \textit{et al.} \cite{Fedotov}) or for modelling earthquakes (see Helmstetter \textit{et al.} \cite{Helmstetter}).

\section{Proof of Theorem \ref{thm Poisson}}\label{Section proof}

In the sequel $ \mathfrak{C}$ denotes a generic constant which may vary from line to line. Its dependencies may be indicated in the index.
\subsection{Proof of part 1) of Theorem \ref{thm Poisson}}

\subsubsection*{Preliminary lemmas}

To prove part 1) of Theorem \ref{thm Poisson} we apply the general results of Kerkyacharian and Picard \cite{KP00}. For that we establish some technical lemmas.
\begin{lemma}\label{lem loicontY boule besov}
If $f$ belongs to $\mathcal{F}(s,{\pi},\mathfrak{M})$ then for $m\geq 1$, $\mathbf{P}_\Delta[f]^{\star m}$ also belongs to $\mathcal{F}(s,{\pi},\mathfrak{M})$.
\end{lemma}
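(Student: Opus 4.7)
The plan is based on two facts: (i) $\mathbf{P}_\Delta[f]^{\star m}$ is a convex mixture of convolution powers of $f$, and (ii) convolution with a probability density is a contraction in Besov norm. I proceed in three steps.

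Step 1: Using Proposition \ref{PropDefOperator} and expanding the $m$-fold convolution by distributivity, I write
$$\mathbf{P}_\Delta[f]^{\star m} = \Big(\sum_{n \geq 1} p_n(\Delta)\, f^{\star n}\Big)^{\star m} = \sum_{k \geq m} q_k^{(m)}\, f^{\star k},$$
where $q_k^{(m)} := \sum_{n_1+\cdots+n_m=k} \prod_{i=1}^m p_{n_i}(\Delta) \geq 0$ and $\sum_{k\geq m} q_k^{(m)} = 1$, since $\big(p_n(\Delta)\big)_{n\geq 1}$ is itself a probability distribution on $\N\setminus\{0\}$. Hence $\mathbf{P}_\Delta[f]^{\star m}$ is a convex combination of the convolution powers $f^{\star k}$, $k\geq m$.

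Step 2: By the subadditivity of the Besov (quasi)norm together with $\sum_k q_k^{(m)} = 1$, the norm of this mixture is dominated by the corresponding convex combination of $\|f^{\star k}\|_{\mathcal{B}^s_{\pi\infty}(\mathcal{D})}$. The problem thus reduces to showing that $\|f^{\star k}\|_{\mathcal{B}^s_{\pi\infty}(\mathcal{D})}\leq \|f\|_{\mathcal{B}^s_{\pi\infty}(\mathcal{D})}$ for every $k\geq 1$. By induction on $k$, this in turn boils down to the contraction inequality $\|f\star g\|_{\mathcal{B}^s_{\pi\infty}(\mathcal{D})} \leq \|f\|_{\mathcal{B}^s_{\pi\infty}(\mathcal{D})}$ for every probability density $g$ on $\R$.

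Step 3: To establish this contraction I would use the equivalent modulus-of-continuity description of Besov spaces. Finite-order differences commute with convolution, and Young's inequality gives $\|u\star g\|_{L_\pi}\leq \|g\|_{L_1}\|u\|_{L_\pi}=\|u\|_{L_\pi}$ for $\pi\geq 1$. Hence the modulus of smoothness of order $r>s$ of $f\star g$ is dominated by that of $f$, and the desired Besov contraction follows.

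The main obstacle I foresee is that the Besov norm is defined intrinsically on the compact domain $\mathcal{D}$ through a boundary-adapted wavelet basis, whereas convolution is a global operation mixing values of $f$ over all of $\R$. I would address this by working on a slight enlargement of $\mathcal{D}$ covering the supports of the boundary-adapted wavelets and by transferring the $\R$-contraction to $\mathcal{D}$ via the equivalence between the wavelet and modulus-of-continuity characterizations (as reflected in Property \eqref{eq ass1}). A secondary subtlety is the case $\pi<1$, in which subadditivity carries a quasi-norm constant and the $L_\pi$ version of Young's inequality for nonnegative kernels must be invoked, but the structural plan is unchanged.
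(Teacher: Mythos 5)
Your proposal is correct and follows essentially the same route as the paper: the heart of both arguments is the contraction $\|f\star g\|_{\mathcal{B}^s_{\pi\infty}}\leq \|f\|_{\mathcal{B}^s_{\pi\infty}}\|g\|_{L_1(\R)}$ (the paper's inequality \eqref{eq etoile}), proved via the modulus-of-continuity characterization of the Besov norm together with Young's inequality, combined with the fact that $\mathbf{P}_\Delta[f]$ is a convex mixture of convolution powers of $f$. Your expansion into $\sum_k q_k^{(m)} f^{\star k}$ versus the paper's iteration of \eqref{eq etoile} on $\mathbf{P}_\Delta[f]^{\star m}=\mathbf{P}_\Delta[f]\star\mathbf{P}_\Delta[f]^{\star(m-1)}$ followed by the triangle inequality is only a difference in bookkeeping, and the boundary/quasi-norm subtleties you flag are real but are also left implicit in the paper.
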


\begin{proof}[{Proof of Lemma \ref{lem loicontY boule besov}}]
It is straightforward to derive $\big\|\mathbf{P}_\Delta[f]^{\star m}\big\|_{L_1(\R)}=1$. The remainder of the proof is a consequence of the following result:
Let $f\in \mathcal{B}^s_{{\pi} \infty}$ and $g\in L_1$ we have \begin{align}\|f\star g\|_{s{\pi} \infty}\leq \|f\|_{s{\pi} \infty}\|g\|_{L_1(\R)}.\label{eq etoile}\tag{$\diamondsuit$}\end{align}
To prove the \eqref{eq etoile} we use the following norm which is equivalent to the Besov norm (see \cite{KerkPicTsyb}) \begin{align}\label{eq Besov bis}
\|\nu\|_{s{\pi} \infty}=\|\nu\|_{L_\pi(\R)}+\|\nu^{(n)}\|_{L_\pi(\R)}+\Big\|\frac{w_{\pi}^2(\nu^{(n)},t)}{t^a}\Big\|_\infty
\end{align} where $s=n+a$, $n\in\N$ and $a\in (0,1]$, and $w$ is the modulus of continuity $$w_{\pi}^2(\nu,t)=\underset{|h|\leq t}{\sup}\big\|\mathbf{D} ^h\mathbf{D} ^h[\nu]\big\|_{L_\pi(\R)},$$ where $\mathbf{D} ^h [\nu](x)=\nu(x-h)-\nu(x)$.
The result is a consequence of Young's inequality and elementary properties of the convolution product. We use the definition \eqref{eq Besov bis} of the norm and treat each term separately. First Young's inequality gives \begin{align}\label{eq ine1}\|f_1\star f_2\|_{L_\pi(\R)}\leq \|f_1\|_{L_\pi(\R)}\|f_2\|_{L_1(\R)}.\end{align} Then the differentiation property of the convolution product leads for $n\geq1$ to \begin{align}\label{eq ine2}\Big\|\frac{d^n}{dx^n}(f_1\star f_2)\Big\|_{L_\pi(\R)}=\Big\|\Big(\frac{d^n}{dx^n} f_1\Big)\star f_2\Big\|_{L_\pi(\R)}\leq \Big\|\frac{d^n}{dx^n} f_1\Big\|_{L_\pi(\R)}\|f_2\|_{L_1(\R)}.\end{align} Finally translation invariance of the convolution product enables to get \begin{align}\big\|\mathbf{D} ^h\mathbf{D} ^h[(f_1\star f_2)^{(n)}]\big\|_{L_\pi(\R)}&=\big\|(\mathbf{D} ^h\mathbf{D} ^h[f_1^{(n)}])\star f_2\big\|_{L_\pi(\R)}\nonumber\\&\leq \big\|\mathbf{D} ^h\mathbf{D} ^h[f_1^{(n)}]\big\|_{L_\pi(\R)}\|f_2\|_{L_1(\R)}.\label{eq ine3}\end{align} Inequality \eqref{eq etoile} is then obtained by bounding \eqref{eq Besov bis} with \eqref{eq ine1}, \eqref{eq ine2} and \eqref{eq ine3} lead to the result.
 \noindent To complete the proof of Lemma \ref{lem loicontY boule besov}, we apply $m-1$ times \eqref{eq etoile} which leads to $$\forall m\in \N\setminus\{0\},\ \ \ \ \ \big\|\mathbf{P}_\Delta[f]^{\star m}\big\|_{s{\pi}\infty}\leq \big\|\mathbf{P}_\Delta[f]\big\|_{s{\pi}\infty}.$$ The triangle inequality gives $\|\mathbf{P}_\Delta[f]^{\star m}\|_{s{\pi}\infty}\leq \|f\|_{s{\pi}\infty}\leq\mathfrak{M}$ which concludes the proof.
\end{proof}

\begin{lemma}\label{lem RosConvol Poisson}
Let $2^{j}\leq  N_T$ then for all $m\in \N\setminus\{0\}$ and for $p\geq 1$ we have $$\E\big[\big|\widehat{\gamma}_{jk}^{(m)}-\gamma_{jk}^{(m)}\big|^p\big|N_T\big]\leq \mathfrak{C}_{p,m,\|g\|_{L_p(\R)},\mathfrak{M},\vartheta} N_T^{-p/2},$$
where $\widehat{\gamma}_{jk}^{(m)}$ is defined in \eqref{eq est Coeffconvol} and \begin{align}\label{eq coeff gamma}\gamma_{jk}^{(m)}=\int g_{jk}(y) \mathbf{P}_\Delta[f]^{\star m}(y)dy.\end{align}
\end{lemma}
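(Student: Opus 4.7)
}

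The plan is to view $\widehat{\gamma}_{jk}^{(m)} - \gamma_{jk}^{(m)}$ as a normalized sum of i.i.d.\ centered random variables and apply Rosenthal's inequality. Conditional on $N_T$, the variables $Y_i := g_{jk}(\mathbf{D}^{\Delta}_m X_{S_i})$, $i=1,\ldots,N_{T,m}$, are i.i.d.\ with density $\mathbf{P}_\Delta[f]^{\star m}$ (as recorded after \eqref{eq est Coeffconvol}) and mean $\gamma_{jk}^{(m)}$ by \eqref{eq coeff gamma}. Setting $Z_i = Y_i - \gamma_{jk}^{(m)}$, we write
\begin{align*}
\E\bigl[\bigl|\widehat{\gamma}_{jk}^{(m)}-\gamma_{jk}^{(m)}\bigr|^p \bigm| N_T\bigr]
= N_{T,m}^{-p}\,\E\Bigl[\Bigl|\sum_{i=1}^{N_{T,m}} Z_i\Bigr|^p \Bigm| N_T\Bigr].
\end{align*}
I first handle $p\geq 2$, then reduce $1\leq p<2$ to $p=2$ by Jensen's inequality $\E[|\cdot|^p|N_T]\leq (\E[|\cdot|^2|N_T])^{p/2}$.

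For $p\geq 2$, Rosenthal's inequality gives
\begin{align*}
\E\Bigl[\Bigl|\sum_{i=1}^{N_{T,m}} Z_i\Bigr|^p \Bigm| N_T\Bigr]
\leq C_p\Bigl(N_{T,m}\,\E|Z_1|^p + N_{T,m}^{p/2}\,(\E Z_1^2)^{p/2}\Bigr).
\end{align*}
The two moments of $Z_1$ are controlled using the fact that $\mathbf{P}_\Delta[f]^{\star m}$ is bounded. Indeed, by Lemma \ref{lem loicontY boule besov} the convolution power $\mathbf{P}_\Delta[f]^{\star m}$ lies in $\mathcal{F}(s,\pi,\mathfrak{M})$; since $s>1/\pi$, the Besov embedding $\mathcal{B}^s_{\pi\infty}(\R)\hookrightarrow L_\infty(\R)$ furnishes a constant $\mathfrak{C}_\mathfrak{M}$ with $\|\mathbf{P}_\Delta[f]^{\star m}\|_\infty\leq \mathfrak{C}_\mathfrak{M}$ (uniform in $m$, $\Delta$, and the underlying $f$). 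Consequently
\begin{align*}
\E Z_1^2 &\leq \E Y_1^2 \leq \mathfrak{C}_\mathfrak{M}\,\|g_{jk}\|_{L_2(\R)}^2 = \mathfrak{C}_\mathfrak{M}\,\|g\|_{L_2(\R)}^2,\\
\E|Z_1|^p &\leq 2^p\,\E|Y_1|^p \leq 2^p\,\mathfrak{C}_\mathfrak{M}\,\|g_{jk}\|_{L_p(\R)}^p \leq \mathfrak{C}\,2^{j(p/2-1)}\|g\|_{L_p(\R)}^p,
\end{align*}
where the last inequality uses the scaling of $\|g_{jk}\|_{L_p}^p$ recorded in the first bullet of Assumption \ref{Ass}. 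Inserting these bounds yields
\begin{align*}
\E\bigl[\bigl|\widehat{\gamma}_{jk}^{(m)}-\gamma_{jk}^{(m)}\bigr|^p\bigm| N_T\bigr]
\leq C\,N_{T,m}^{1-p}\,2^{j(p/2-1)} + C\,N_{T,m}^{-p/2}.
\end{align*}

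The hypothesis $2^j\leq N_T$ is now used to absorb the first term. Since $N_{T,m}=\lfloor N_T/m\rfloor$, for $T$ large enough $N_T\leq 2 m\,N_{T,m}$, so $2^j\leq 2 m\, N_{T,m}$; for $p\geq 2$ the exponent $p/2-1\geq 0$, whence $2^{j(p/2-1)}\leq (2m)^{p/2-1}\,N_{T,m}^{p/2-1}$. This turns $N_{T,m}^{1-p}\,2^{j(p/2-1)}$ into $\mathfrak{C}_{p,m}\,N_{T,m}^{-p/2}$, which matches the variance term. Finally, $N_{T,m}\geq N_T/(2m)$ gives $N_{T,m}^{-p/2}\leq (2m)^{p/2}\,N_T^{-p/2}$, closing the bound for $p\geq 2$. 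The Jensen reduction handles $p\in[1,2)$.

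The step I expect to require the most care is the passage from the $L_p$-norm bound $\|g_{jk}\|_{L_p}^p\lesssim 2^{j(p/2-1)}$ back to $N_T^{-p/2}$ via the constraint $2^j\leq N_T$: it is precisely the role of this constraint to ensure the classical parametric rate survives the wavelet scaling, and it explains why Rosenthal (rather than just Bernstein or a variance computation) is needed to cover all $p\geq 1$ simultaneously. The appearance of $\vartheta$ in the constant ultimately traces back to controlling $\|\mathbf{P}_\Delta[f]^{\star m}\|_\infty$ uniformly in the parameter range $\vartheta\in[\underline{\mathfrak{T}},\overline{\mathfrak{T}}]$ used in Theorem \ref{thm Poisson}.
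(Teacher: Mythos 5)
Your proof is correct and follows essentially the same route as the paper: Rosenthal's inequality applied conditionally on $N_T$ to the $N_{T,m}$ i.i.d.\ summands, with the moment bounds obtained from $\|\mathbf{P}_\Delta[f]^{\star m}\|_\infty\leq \mathfrak{C}\mathfrak{M}$ via Lemma \ref{lem loicontY boule besov} and the Besov--Sobolev embedding, and the hypothesis $2^j\leq N_T$ absorbing the $2^{j(p/2-1)}$ factor. Your explicit Jensen reduction for $1\leq p<2$ and the spelled-out absorption step are minor refinements of what the paper does implicitly, not a different argument.
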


\begin{proof}[{Proof of Lemma \ref{lem RosConvol Poisson}}]
The proof is obtained with Rosenthal's inequality: let $p\geq 1$ and let $(Y_1,\ldots,Y_n)$ be independent random variables such that $\E[Y_i]=0$ and $\E\big[|Y_i|^p\big]<\infty$. Then there exists $\mathfrak{C}_p$ such that \begin{align}\label{eq Ros}\E\bigg[\Big|\sum_{i=1}^nY_i \Big|^p\bigg]\leq \mathfrak{C}_p\bigg\{\sum_{i=1}^n\E\big[|Y_i|^p\big]+\Big(\sum_{i=1}^n\E\big[|Y_i|^2\big]\Big)^{p/2}\bigg\}.\end{align}

\noindent The $\big(\mathbf{D}^{\Delta_T}_m X_{S_i}\big)$ are independent and identically distributed with common density $\mathbf{P}_{\Delta_T}[f]^{\star m}$ and $\E [\widehat{\gamma}_{jk}^{(m)}]=\gamma_{jk}^{(m)}.$ Then $ \widehat{\gamma}_{jk}^{(m)}-\gamma_{jk}^{(m)}$ is a sum of $N_{T,m}=\lfloor N_T/m\rfloor$ centered, independent and identically distributed random variables. It follows that
\begin{align*}
\E \big[\big|g_{jk}(\mathbf{D}^{\Delta_T}_m X_{S_i})\big|^p\big]&\leq
2^p2^{jp/2}\int |g(2^jy-k)|^p  \mathbf{P}_{\Delta_T}[f]^{\star m}(y)dy\\&=
2^p2^{j(p/2-1)}\int |g(z)|^p  \mathbf{P}_{\Delta_T}[f]^{\star m}\Big(\frac{z+k}{2^j}\Big)dz\\
&\leq 2^p2^{j(p/2-1)}\|g\|_{L_p(\R)}^p\big\|\mathbf{P}_{\Delta_T}[f]^{\star m}\big\|_\infty,\end{align*} where we made the substitution $z=2^jx-k$. To control $\|\mathbf{P}_{\Delta_T}[f]^{\star m}\|_\infty$ we use the Sobolev embeddings (see \cite{Cohen,Donoho96,KerkPicTsyb}) \begin{align}\label{eq Sobolev embed}\mathcal{B}^s_{{\pi} \infty}\hookrightarrow \mathcal{B}^{s'}_{p \infty}\ \ \ \mbox{ and }\ \ \  \mathcal{B}^{s'}_{{\pi} \infty}\hookrightarrow\mathcal{B}^s_{{\infty} \infty},\end{align}where $p>\pi$, $s\pi>1$ and $s'=s-1/\pi+1/p$, it follows that $$\|\mathbf{P}_{\Delta_T}[f]^{\star m}\|_\infty\leq \mathfrak{C}_{s,\pi} \|\mathbf{P}_{\Delta_T}[f]^{\star m}\|_{\mathcal{B}^s_{\pi\infty(\mathcal{D})}}.$$ We deduce from Lemma \ref{lem loicontY boule besov} that $\|\mathbf{P}_{\Delta_T}[f]^{\star m}\|_\infty\leq \mathfrak{C}_{s,\pi} \mathfrak{M}$. We get $$\E \big[\big|g_{jk}(\mathbf{D}^{\Delta_T}_m X_{S_i})\big|^p\big]\leq 2^p2^{j(p/2-1)}\|g\|_{L_p(\R)}^p\mathfrak{M}$$ and $\E \big[\big|g_{jk}(\mathbf{D}^{\Delta_T}_m X_{S_i})\big|^2\big]\leq \mathfrak{M}$ since $\|g\|_2^2=1$.

The accept-reject algorithm ensures that for all $n\geq1$ the increments $\big(\mathbf{D}^{\Delta_T} X_{S_1},\ldots,\mathbf{D}^{\Delta_T} X_{S_n}\big)$ are independent of $N_T$ and then $N_{T,m}$. Indeed the $\big(\mathbf{D}^{\Delta_T}X_i,i=1,\ldots,\lfloor T\Delta_T^{-1}\rfloor\big)$ are independent and identically distributed and the $\big(\mathbf{D}^{\Delta_T} X_{S_i}\big)$ are constructed with $S_i=\inf\big\{j>S_{i-1},\mathbf{D}^{\Delta_T}X_j\ne0\big\}.$ Therefore we can apply Rosenthal's inequality conditional on $N_T$ to $ \widehat{\gamma}_{jk}^{(m)}-\gamma_{jk}^{(m)}$ and derive for $p\geq 1$ \begin{align*}
 \E\big[\big| \widehat{\gamma}_{jk}^{(m)}-\gamma_{jk}^{(m)}\big|^p\big|N_T\big]&\leq \mathfrak{C}_p \Big\{2^p\Big(\frac{2^{j}}{ N_{T,m}}\Big)^{p/2-1}\|g\|_{L_p(\R)}^p\mathfrak{M}+\mathfrak{M}^{p/2}\Big\}N_{T,m}^{-p/2}.
 \end{align*} This concludes the proof.\end{proof}

\begin{lemma}\label{lem BernConvol Poisson} Choose $j$ and $c$ such that $$2^jN_T^{-1}\log\big(
N_T^{1/2}\big)\leq 1\mbox{ and }c^2\geq \frac{16m}{3}\Big(\mathfrak{M}+\frac{c\|g\|_\infty}{6}\Big).$$ For all $m\in \N\setminus\{0\}$ and $r\geq 1$ let $\kappa_r=c r$. We have $$\PP\Big(|\widehat{\gamma}_{jk}^{(m)}-\gamma_{jk}^{(m)}|\geq \frac{\kappa_r}{2}
N_T^{-1/2}\sqrt{\log\big(N_T^{1/2}\big)}\Big|N_T\Big)\leq N_T^{-r/2},$$
where $\widehat{\gamma}_{jk}^{(m)}$ is defined in \eqref{eq est Coeffconvol} and $\gamma_{jk}^{(m)}$ in \eqref{eq coeff gamma}.
\end{lemma}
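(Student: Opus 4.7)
The bound I want is a standard Bernstein deviation inequality, and the plan is to carry it out conditionally on $N_T$. As already used in the proof of Lemma \ref{lem RosConvol Poisson}, the accept-reject construction makes the ordered non-zero increments $\bigl(\mathbf{D}^{\Delta_T} X_{S_i}\bigr)_i$ independent of $N_T$, so conditional on $N_T$ I may view $\widehat{\gamma}_{jk}^{(m)}$ as the empirical mean of $N_{T,m}=\lfloor N_T/m\rfloor$ i.i.d.\ copies of $g_{jk}\bigl(\mathbf{D}^{\Delta_T}_m X_{S_1}\bigr)$, whose common density is $\mathbf{P}_{\Delta_T}[f]^{\star m}$.

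The two ingredients that Bernstein requires for the centred variables $Y_i := g_{jk}(\mathbf{D}^{\Delta_T}_m X_{S_i})-\gamma_{jk}^{(m)}$ are a variance bound and a sup-norm bound, both of which are already essentially present in the previous proof. For the variance, the change of variable $z=2^jx-k$ together with $\|g\|_{L_2(\R)}=1$, Lemma \ref{lem loicontY boule besov} and the Sobolev embeddings \eqref{eq Sobolev embed} yield $\E[Y_i^2]\leq \|\mathbf{P}_{\Delta_T}[f]^{\star m}\|_\infty \leq \mathfrak{C}_{s,\pi}\mathfrak{M}$, which I absorb into $\mathfrak{M}$ up to a harmless constant. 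For the sup-norm, $g_{jk}(\cdot)=2^{j/2}g(2^j\cdot-k)$ gives $|Y_i|\leq 2\cdot 2^{j/2}\|g\|_\infty$.

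With the threshold $t = (\kappa_r/2)N_T^{-1/2}\sqrt{\log(N_T^{1/2})}$, Bernstein's inequality applied conditionally on $N_T$ then produces
$$\PP\bigl(|\widehat{\gamma}_{jk}^{(m)}-\gamma_{jk}^{(m)}|\geq t\bigm|N_T\bigr)\leq 2\exp\!\Bigl(-\frac{N_{T,m}\,t^2/2}{\mathfrak{M}+ (2\cdot 2^{j/2}\|g\|_\infty\, t)/3}\Bigr).$$
The resolution assumption $2^jN_T^{-1}\log(N_T^{1/2})\leq 1$ is then used precisely to tame the linear-in-$t$ piece of the denominator: it gives $2^{j/2}\sqrt{\log(N_T^{1/2})}\leq N_T^{1/2}$, whence $2\cdot 2^{j/2}\|g\|_\infty\, t\leq \|g\|_\infty\kappa_r$. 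Lower-bounding $N_{T,m}\geq N_T/(2m)$ for $T$ large and simplifying reduces the exponent to a quantity of the form $\kappa_r^2\log(N_T^{1/2})/\bigl[(16m/3)(\mathfrak{M}+\kappa_r\|g\|_\infty/6)\bigr]$.

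The final step is purely algebraic. Substituting $\kappa_r=cr$, the hypothesis $c^2\geq (16m/3)(\mathfrak{M}+c\|g\|_\infty/6)$ is tailored exactly to force this exponent to be at least $r\log(N_T^{1/2}) = (r/2)\log N_T$ for every $r\geq 1$, which delivers the claimed bound $N_T^{-r/2}$ (the stray factor $2$ in front of the Bernstein exponential being absorbed for $T$ large enough). The only mildly delicate point is this last verification: both the variance term $\mathfrak{M}$ and the bounded-term $\kappa_r\|g\|_\infty/6$ contribute non-negligibly to the denominator, and $c$ must be chosen once and for all so that the quadratic-in-$r$ numerator $\kappa_r^2$ dominates both uniformly in $r\geq 1$ — which is exactly what the quadratic-in-$c$ constraint encodes. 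I anticipate no deeper obstacle beyond this bookkeeping.
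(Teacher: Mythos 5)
Your proposal is correct and follows essentially the same route as the paper's proof: Bernstein's inequality applied conditionally on $N_T$ (using the independence of the ordered nonzero increments from $N_T$), with the variance bound $\E[Y_i^2]\leq\mathfrak{M}$ from Lemma \ref{lem loicontY boule besov} and the Sobolev embeddings, the sup-norm bound $2^{j/2}\|g\|_\infty$, the resolution condition $2^jN_T^{-1}\log(N_T^{1/2})\leq 1$ to control the linear term in the denominator, and the quadratic constraint on $c$ to close the algebra. The only (harmless) divergence is in the bookkeeping of $N_{T,m}$ versus $N_T/m$, where your bound $N_{T,m}\geq N_T/(2m)$ is in fact cleaner than the paper's stated inequality.
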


\begin{proof}[Proof of Lemma \ref{lem BernConvol Poisson}]
The proof is obtained with Bernstein's inequality. Consider $Y_1,\ldots,Y_n$ independent random variables such that $|Y_i|\leq \mathfrak{A}$, $\E[Y_i]=0$ and $b_n^2=\sum_{i=1}^n\E[Y_i^2]$. Then for any $\lambda>0$, \begin{align}\label{eq Bernstein}\PP\Big(\Big|\sum_{i=1}^nY_i\Big| >\lambda\Big)\leq 2\exp\Big(-\frac{\lambda^2}{2(b_n^2+\frac{\lambda \mathfrak{A}}{3})}\Big).\end{align}

\noindent For all $m\geq1$, $ \widehat{\gamma}_{jk}^{(m)}-\gamma_{jk}^{(m)}$ is a sum of $N_{T,m}=\lfloor N_T/m\rfloor$ centered independent and identically distributed random variables bounded by $2^{j/2}\|g\|_\infty$ and $\E \big[\big|g_{jk}(\mathbf{D}^{\Delta_T}_m X_{S_i})\big|^2\big]\leq\mathfrak{ M}.$ The accept-reject algorithm ensures that for all $n\geq1$ the increments $\big(\mathbf{D}^{\Delta_T} X_{S_1},\ldots,\mathbf{D}^{\Delta_T} X_{S_n}\big)$ are independent of $N_T$ (see proof of Lemma \ref{lem RosConvol Poisson}), we apply Bernstein's inequality conditional on $N_T$. We have
 \begin{align*}
 \PP\Big(|\widehat{\gamma}_{jk}^{(m)}&-\gamma_{jk}^{(m)}|\geq \frac{\kappa_r}{2}N_T^{-1/2}\sqrt{\log\big(N_T^{1/2}\big)}\Big|N_T\Big)\\
 \leq&2\exp\Bigg(-\frac{\kappa_r^2N_T^{-1}\log\big(N_T^{1/2}\big)N_{T,m}^2} {8\Big(N_{T,m}\mathfrak{M}+\frac{\kappa_rN_{T,m}N_T^{-1/2}\sqrt{\log\big(N_T^{1/2}\big)}2^{j/2}\|g\|_\infty}{6}\Big)}\Bigg)\\
 =&2\exp\Bigg(-\frac{c^2rN_T^{-1}N_{T,m}} {8\Big(\mathfrak{M}+\frac{\kappa_rN_T^{-1/2} \sqrt{\log\big(N_T^{1/2}\big)}2^{j/2}\|g\|_\infty}{6}\Big)}r\log\big(N_T^{1/2}\big)\Bigg).
 \end{align*}
Using that $$mN_T^{-1}N_{T,m}=\frac{m}{N_T}\Big\lfloor \frac{N_T}{m}\Big\rfloor \geq \frac{3}{2},$$ for $T$ large enough and $2^{j/2} N_T^{-1} \sqrt{\log\big(N_T^{1/2}\big)}\leq 1$ it follows that \begin{align*}
 \PP\Big(|\widehat{\gamma}_{jk}^{(m)}-\gamma_{jk}^{(m)}&|\geq \frac{\kappa_r}{2}N_T^{-1/2}\sqrt{\log\big(N_T^{1/2}\big)}\Big|N_T\Big)\nonumber
\\& \leq2\exp\Bigg(-\frac{3c^2r} {16m\big(\mathfrak{M}+\frac{\kappa_r\|g\|_\infty}{6}\big)}r\log\big(N_T^{1/2}\big)\Bigg).\end{align*} With $c^2\geq \frac{16m}{3}\big(\mathfrak{M}+\frac{c\|g\|_\infty}{6}\big)$ we get \begin{align*}
 \PP\Big(|\widehat{\gamma}_{jk}^{(m)}-\gamma_{jk}^{(m)}&|\geq \frac{\kappa_r}{2}N_T^{-1/2}\sqrt{\log\big(N_T^{1/2}\big)}\Big|N_T\Big)\nonumber
 \leq N_T^{-r/2}.\end{align*}The proof is complete.\end{proof}

\subsubsection*{Completion of proof of part 1) of Theorem \ref{thm Poisson}}
Part 1) of Theorem \ref{thm Poisson} is a consequence of Lemma \ref{lem loicontY boule besov}, \ref{lem RosConvol Poisson}, \ref{lem BernConvol Poisson} and of the general theory of wavelet threshold estimators of \cite{KP00}. It suffices to have conditions (5.1) and (5.2) of Theorem 5.1 of \cite{KP00}, which are satisfied --Lemma \ref{lem RosConvol Poisson} and \ref{lem BernConvol Poisson}-- with $c(T)=N_T^{-1/2}$ and $\Lambda_n=c(T)^{-1}$ (with the notations of \cite{KP00}). We can now apply Theorem 5.1, its Corollary 5.1 and Theorem 6.1 of \cite{KP00} to obtain the result.

\subsection{Proof of part 2) of Theorem \ref{thm Poisson}}

\subsubsection*{Preliminary result}

The result of part 1) of Theorem \ref{thm Poisson} where given conditional on $N_T$. To prove part 2) we replace $N_T$ by its deterministic counterpart. We introduce the following result.
 \begin{proposition}\label{prop control rate}
For all $r>0$, there exist $1\leq\mathfrak{C}_\vartheta<\infty,$ where $\vartheta\rightarrow \mathfrak{C}_\vartheta$ is continuous, such that \begin{align*}
1/\mathfrak{C}_\vartheta T^{-r}\leq\E\big[N_T^{-r}\big]&\leq  \mathfrak{C}_\vartheta  T^{-r}.
\end{align*}
\end{proposition}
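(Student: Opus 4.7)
The plan is to exploit the fact that $N_T$ has an explicit distribution: $N_T$ is binomial with parameters $n_T = \lfloor T\Delta_T^{-1}\rfloor$ and $p_T = 1 - e^{-\vartheta\Delta_T}$, since the indicators $\mathds{1}_{\{\mathbf{D}^{\Delta_T}X_i \neq 0\}}$ are i.i.d.\ Bernoulli with parameter $p_T$. A first-order Taylor expansion gives $\vartheta T/2 \leq n_T p_T \leq \vartheta T$ for $T$ large, so the Binomial mean is of order $T$ with constants depending continuously on $\vartheta$. The proposition therefore reduces to lower-tail concentration of $N_T$ around its mean, which I will handle via Bernstein's inequality.

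For the upper bound, I split $\E[N_T^{-r}]$ according to whether $N_T \geq n_T p_T/2$ or not. On the first event, $N_T^{-r} \leq (n_T p_T/2)^{-r} \leq (2/\vartheta)^r T^{-r}$, which already produces the correct rate. On the complementary event intersected with $\{N_T \geq 1\}$, I bound $N_T^{-r}$ trivially by $1$ and invoke Bernstein's inequality applied to the $n_T$ centered indicators of variance at most $p_T$ to obtain $\PP(N_T < n_T p_T/2) \leq \exp(-c\vartheta T)$, so the contribution is $o(T^{-r})$. The event $\{N_T = 0\}$ has probability $(1-p_T)^{n_T}$, which is exponentially small in $T$; under the natural convention that $N_T^{-r}=0$ on $\{N_T=0\}$ (or, equivalently, by working with $N_T \vee 1$), its contribution is negligible.

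For the lower bound, Markov's inequality yields $\PP(N_T > 2 n_T p_T) \leq 1/2$. Combined with the exponential decay of $\PP(N_T=0)$, the event $\{1 \leq N_T \leq 2 n_T p_T\}$ has probability at least $1/4$ for $T$ large, and on it $N_T^{-r} \geq (2 n_T p_T)^{-r} \geq (2\vartheta T)^{-r}$. Restricting the expectation to this event gives
\[
\E[N_T^{-r}] \geq \tfrac{1}{4}(2\vartheta T)^{-r},
\]
which is the desired lower bound.

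I do not anticipate a serious obstacle: both inequalities reduce to classical facts about Binomial concentration, and continuity of $\vartheta \mapsto \mathfrak{C}_\vartheta$ is automatic since every constant produced above is an explicit continuous function of $\vartheta$ (through $p_T$ and the Bernstein constants). The only mildly delicate point is ensuring that the contribution of $\{N_T=0\}$ is benign, which holds under any sensible convention thanks to the exponential smallness of $\PP(N_T=0)$ compared with the polynomial target $T^{-r}$.
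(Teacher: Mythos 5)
Your proof is correct and follows essentially the same route as the paper: Bernstein's inequality for the Binomial count $N_T$, which concentrates around $\lfloor T\Delta_T^{-1}\rfloor p(\Delta_T)\asymp \vartheta T$, followed by splitting the expectation on the concentration event. The only differences are cosmetic --- you use Markov's inequality for the upper tail in the lower bound where the paper reuses the Bernstein event, and you treat $\{N_T=0\}$ explicitly, which is if anything slightly more careful than the paper's implicit use of $N_T\geq 1$.
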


\begin{proof}[Proof of Proposition \ref{prop control rate}]
We have $$N_T=\sum_{i=1}^{\lfloor T\Delta_T^{-1}\rfloor}\mathds{1}_{\{\mathbf{D}^{\Delta_T} X_i\ne0\}},$$ where $$\E\big[\mathds{1}_{\{\mathbf{D}^{\Delta_T} X_i\ne0\}}\big]=p(\Delta_T)=1-\exp(-\vartheta\Delta_T).$$
Introduce $Y_i=\mathds{1}_{\{\mathbf{D}^{\Delta_T} X_i\ne0\}}-p(\Delta_T)$, the $Y_i$ are centered independent and identically distributed random variables bounded by 2 and $\E [Y_i^2]\leq p(\Delta_T)$, it follows from Bernstein's inequality \eqref{eq Bernstein} that for $\lambda>0$
\begin{align}\label{eq N control}
\PP\Big(\Big|\frac{N_T}{\lfloor T\Delta_T^{-1}\rfloor}-p(\Delta_T)\Big|>\lambda\Big)
&\leq\exp\bigg(- \frac{\lfloor T\Delta_T^{-1}\rfloor\lambda^2}{2\big( p(\Delta_T)+\frac{2\lambda}{3}\big)}\bigg).
\end{align} We choose $\lambda=p(\Delta_T)/2$, on the set $\big\{\big|\frac{N_T}{\lfloor T\Delta_T^{-1}\rfloor}-p(\Delta_T)\big|\leq \lambda\big\}$ we have $$\lfloor T\Delta_T^{-1}\rfloor\frac{p(\Delta_T)}{2}\leq N_T\leq \lfloor T\Delta_T^{-1}\rfloor\frac{3p(\Delta_T)}{2}.$$ Moreover for $\Delta_T$ small enough we have that $$\frac{\vartheta}{2}\leq p(\Delta_T)=1-\exp(-\vartheta\Delta_T)\leq \vartheta\Delta_T.$$ We have for all $\lambda>0$
\begin{align*}
\E\big[N_T^{-r}\big]&=\E\Big[N_T^{-r} \mathds{1}_{\big\{|\frac{N_T}{\lfloor T\Delta_T^{-1}\rfloor}-p(\Delta_T)|>\lambda\big\}}\Big]+\E\Big[N_T^{-r}\mathds{1}_{\big\{|\frac{N_T}{\lfloor T\Delta_T^{-1}\rfloor}-p(\Delta_T)|\leq \lambda\big\}}\Big].
\end{align*} Since for $r>0$ the function $x\rightarrow x^{-r}$ is decreasing and $N_T\geq1$ we have using \eqref{eq N control} the upper bound\begin{align*}
\E\big[N_T^{-r}\big]&\leq \PP\Big(\Big|\frac{N_T}{\lfloor T\Delta_T^{-1}\rfloor}-p(\Delta_T)\Big|>\frac{p(\Delta_T)}{2}\Big)+ \Big(\frac{\lfloor T\Delta_T^{-1}\rfloor p(\Delta_T)}{2}\Big)^{-r}\\
&\leq \exp\bigg(- \frac{\lfloor T\Delta_T^{-1}\rfloor p(\Delta_T)^2}{8\big( p(\Delta_T)+\frac{p(\Delta_T)}{3}\big)}\bigg)+ \Big(\frac{\lfloor T\Delta_T^{-1}\rfloor p(\Delta_T)}{2}\Big)^{-r}\\
&\leq \exp\bigg(- \frac{3\vartheta}{64}T\bigg)+ \Big(\frac{T\vartheta}{4}\Big)^{-r}.\end{align*} For the lower bound we have
\begin{align*}
\E\big[N_T^{-r}\big]&\geq \Big(\frac{3\lfloor T\Delta_T^{-1}\rfloor p(\Delta_T)}{2}\Big)^{-r}\geq
\Big(\frac{3T\vartheta}{2}\Big)^{-r}.\end{align*} Then there exists $1\leq \mathfrak{C}_\vartheta<\infty$ with $\vartheta\rightarrow \mathfrak{C}_\vartheta$ continuous such that $$1/\mathfrak{C}_\vartheta T^{-r}\leq\E\big[N_T^{-r}\big]\leq  \mathfrak{C}_\vartheta  T^{-r}.$$ The proof is now complete.\end{proof}

\subsubsection*{Completion of proof of part 2) of Theorem \ref{thm Poisson}}

To prove Theorem \ref{thm Poisson} we define the quantity for $K$ in $\N$ and $x$ in $\mathcal{D}$
 \begin{align*}
\widehat{f}^K_{T,\Delta}(x)&=\sum_{m=1}^{K+1}\frac{(-1)^{m+1}}{m}\frac{\big(e^{\vartheta\Delta}-1\big)^m}{ \vartheta\Delta}\widehat{P_{\Delta,m}}(x).
 \end{align*} It is the estimator of $f$ one would compute if $\vartheta$ were known. We decompose the $L_p$ error as follows \begin{align*}
\big(\E\big[\|\widetilde{f}^K_{T,\Delta_T}-f\|_{L_p(\mathcal{D})}^p\big]\big)^{1/p}\leq& \big(\E\big [\|\widetilde{f}^K_{T,\Delta_T}-\widehat{f}^K_{T,\Delta_T}\|_{L_p(\mathcal{D})}^p\big]\big)^{1/p}\\ &+\big(\E\big[\|\widehat{f}^K_{T,\Delta_T}-f\|_{L_p(\mathcal{D})}^p\big]\big)^{1/p},
\end{align*} and control each term separately.

First we control $\E\big[\|\widehat{f}^K_{T,\Delta_T}-f\|_{L_p(\mathcal{D})}^p\big]$, using the triangle inequality we get
\begin{align}
&\Big(\E\Big[\Big\|\sum_{m=1}^{K+1} \frac{(-1)^{m+1}}{m}\frac{\big(e^{\vartheta{\Delta_T}} -1\big)^m}{\vartheta{\Delta_T}} \widehat{P_{\Delta_T,m}}-\mathbf{P}_{\Delta_T}^{-1}\big[\mathbf{P}_{\Delta_T}[f]\big]\Big\|_{L_p(\mathcal{D})}^p \Big]\Big)^{1/p}\nonumber \\
&\leq\sum_{m=1}^{K+1}\frac{\big(e^{\vartheta{\Delta_T}}-1\big)^{m}}{m\vartheta{\Delta_T}} \big(\E\big[\big\|\widehat{P_{\Delta_T,m}}-\mathbf{P}_{\Delta_T}[f]^{\star m}\big\|_{L_p(\mathcal{D})}^p\big]\big)^{1/p}\label{eq thm T11}\\
&\ \ \ +\sum_{m=K+2}^{\infty}\frac{\big(e^{\vartheta{\Delta_T}}-1\big)^m}{m\vartheta{\Delta_T}} \|\mathbf{P}_{\Delta_T}[f]^{\star m}\|_{L_p(\R)}\label{eq thm T12}.
\end{align}
To bound \eqref{eq thm T11} we use part 1) of Theorem \ref{thm Poisson} in which the supremum is taken over the class $\{\mathbf{P}_{\Delta_T}[f]^{\star m}\in\mathcal{F}(s,{\pi},\mathfrak{M})\}$. With the inclusion \begin{align*}\{\mathbf{P}_{\Delta_T}[f]^{\star m},f\in\mathcal{F}(s,{\pi},\mathfrak{M})\}\subset\mathcal{F}(s,{\pi},\mathfrak{M})\end{align*} and Proposition \ref{prop control rate} applied with $r={\alpha(s,p,\pi)} p>0,$ we deduce the upper bound for $m\geq1$ \begin{align}\E\big[\big\|\widehat{P_{\Delta_T,m}}-\mathbf{P}_{\Delta_T}^{-1}\big[\mathbf{P}_{\Delta_T}[f]\big\|_{L_p(\mathcal{D})}^p\big]&\leq \mathfrak{C}\E\big[N_T^{-{\alpha(s,p,\pi)} p}\big]\nonumber\\&\leq\mathfrak{C} T^{-{\alpha(s,p,\pi)} p}\label{eq bound T11} ,\end{align} where $\mathfrak{C}$ depends on $(s,\pi,p,\mathfrak{M},\phi,\psi,K,\vartheta)$. To bound \eqref{eq thm T12} Young's inequality and $\big\|\mathbf{P}_{\Delta_T}[f]\big\|_{L_1(\R)}=1$ enable to get $$\big\|\mathbf{P}_{\Delta_T}[f]^{\star m}\big\|_{L_p(\R)}\leq\big\|\mathbf{P}_{\Delta_T}[f]\big\|_{L_p(\R)}\ \ \ \mbox{ for }m\geq1.$$ The triangle inequality leads to $\big\|\mathbf{P}_{\Delta_T}[f]\big\|_{L_p(\R)}\leq\|f\|_{L_p(\R)}$ and we use the Sobolev embeddings \eqref{eq Sobolev embed} to get $\|f\|_{L_p(\R)}\leq \mathfrak{C}_{s,\pi,p}\mathfrak{M}$. We derive the upper bound \begin{align}\sum_{m=K+2}^{\infty}\frac{1}{m}\frac{\big(e^{\vartheta{\Delta_T}}-1\big)^m}{\vartheta{\Delta_T}} &\big\|\mathbf{P}_{\Delta_T}[f]^{\star m}\big\|_{L_p(\R)}\nonumber\\&\leq\|f\|_{L_p(\R)}\sum_{m=K+2}^{\infty}\frac{1}{m}\frac{\big(e^{\vartheta{\Delta_T}}-1\big)^m}{\vartheta{\Delta_T}}\nonumber\\&\leq \mathfrak{C}_{K,\vartheta,\mathfrak{M}} {\Delta_T}^{K+1}.\label{eq bound T12}\end{align} Thus from \eqref{eq bound T11} and \eqref{eq bound T12} we obtain \begin{align*}
\underset{f\in\mathcal{F}(s,{\pi},\mathfrak{M})}{\sup}\big(\E \big[\big\|\widehat{f}^K_{T,\Delta_T} -f\big\|_{L_p(\mathcal{D})}^p\big] \big)^{1/p}&\leq \mathfrak{C} \max\big\{T^{-{\alpha(s,p,\pi)}},\Delta_T^{K+1}\big\},
\end{align*} where $\mathfrak{C}$ depends on $(s,\pi,p,\mathfrak{M},\phi,\psi,K,\vartheta)$. Since $\vartheta\rightarrow \mathfrak{C}$ is continuous we get for $p\geq 1$ \begin{align*}\underset{\vartheta\in [\underline{\mathfrak{T}},\overline{\mathfrak{T}}] }{\sup}\ \underset{f\in\mathcal{F}(s,{\pi},\mathfrak{M})}{\sup}\big(\E \big[\big\|\widehat{f}^K_{T,\Delta_T} -f\big\|_{L_p(\mathcal{D})}^p\big] \big)^{1/p}&\leq \mathfrak{C}_{K,\mathfrak{M}} \max\big\{T^{-{\alpha(s,p,\pi)}},\Delta_T^{K+1}\big\},\end{align*} where $\mathfrak{C}$ depends on $(s,\pi,p,\mathfrak{M},\phi,\psi,K)$

\

We now control $\E\big [\|\widetilde{f}^K_{T,\Delta_T}-\widehat{f}^K_{T,\Delta_T}\|_{L_p(\mathcal{D})}^p\big] $ and use \eqref{eq est tau0} to derive
\begin{align*}
\widetilde{f}^K_{T,\Delta_T}
=\sum_{m=1}^{K+1}\frac{(-1)^{m}}{m} \frac{\big((1-\widehat{p}_T)^{-1}-1\big)^m}{\log(1-\widehat{p}_T)}\widehat{P_{\Delta_T,m}},\end{align*} where $\widehat{P_{\Delta_T,m}}$ does not depend on $\vartheta$ (see \eqref{eq est Coeffconvol}). Define $$G_m(x)=\frac{((1-x)^{-1}-1)^m}{\log(1-x)}.$$ The triangle inequality leads to
\begin{align*}
\big(\E\big[\|\widehat{f}^K_{T,\Delta_T}\big(\widehat{\vartheta}\big)-&\widehat{f}^K_{T,\Delta_T}\|_{L_p(\mathcal{D})}^p \big]\big)^{1/p}\\&\leq \sum_{m=1}^{K+1}\big(\E\big[\|\big(G_m(\widehat{p}_T)-G_m(p(\Delta_T))\big)\widehat{P_{\Delta_T,m}}\|_{L_p(\mathcal{D})}^p \big]\big)^{1/p},\end{align*} where $p(\Delta_T)$ verifies $p(\Delta_T)=1-e^{-\vartheta\Delta_T}\leq\mathfrak{C}_{\underline{\mathfrak{T}},\overline{\mathfrak{T}}}\Delta_T$ since $$0<1-e^{-\underline{\mathfrak{T}}\Delta_T}\leq 1-e^{-\vartheta\Delta_T}\leq 1-e^{-\mathfrak{T}\Delta_T}<1.$$ Moreover, we have $$G'_m(x)=\frac{mx^{m-1}}{(1-x)^{m+1}\log(1-x)}+\frac{x^{m}}{(1-x)^{m+1}\big(\log(1-x)\big)^2},$$ then for all $m\geq1$ $xG'_m(x)$ is continuous over $(0,1/2]$ and converges to 0 when $x\rightarrow 0.$ We deduce \begin{align*}
\E\big[\|\widehat{f}^K_{T,\Delta_T}\big(\widehat{\vartheta}\big)-&\widehat{f}^K_{T,\Delta_T}\|_{L_p(\mathcal{D})}\big]^{1/p}\\&\leq \mathfrak{C}_{\underline{\mathfrak{T}},\overline{\mathfrak{T}},K} \Delta_T^{-1} \E\big[\big\|\big(\widehat{p}_T-p(\Delta_T)\big)\widehat{P_{\Delta_T,m}}\big\|_{L_p(\mathcal{D})}^p\big]^{1/p}.\end{align*} Cauchy-Schwarz inequality leads to \begin{align*}\E\big[\big\|\big(\widehat{p}_T-p(\Delta_T)\big)&\widehat{P_{\Delta_T,m}}\big\|_{L_p(\mathcal{D})}^p\big]^2\\&\leq \E\Big[\big\|\big(\widehat{p}_T-p(\Delta_T)\big)\big\|_{2p}^{2p}\Big] \E\Big[\big\|\widehat{P_{\Delta_T,m}}\big\|_{L_{2p}(\mathcal{D})}^{2p}\Big] ,\end{align*} where using part 1) of Theorem \ref{thm Poisson} and that $N_T\geq1$ we have
\begin{align}\E\Big[\big\|\widehat{P_{\Delta_T,m}}\big\|_{L_{2p}(\mathcal{D})}^{2p}\Big]& \leq \E\Big[\|\widehat{P_{\Delta_T,m}}-\mathbf{P}_{\Delta_T}[f]^{\star m}\|_{L_{2p}(\mathcal{D})}^{2p}\Big]+\|\mathbf{P}_{\Delta_T}[f]^{\star m}\|_{L_{2p}(\mathcal{D})}^{2p}\nonumber\\&\leq \mathfrak{C}\E[N_T^{-2{\alpha(s,p,\pi)} p}]+\mathfrak{M}^{2p}\nonumber\\&\leq \mathfrak{C}\label{eq thm bound}\end{align} where $\mathfrak{C}$ depends on $(s,\pi,p,\mathfrak{M},\phi,\psi)$. We apply Rosenthal's inequality \eqref{eq Ros} to conclude the proof: $\widehat{p}_T-p(\Delta_T)$ is the sum of independent and identically distributed centered random variables $$\big(Y_i=\mathds{1}_{\{\mathbf{D}^{\Delta_T}X_i\ne 0\}}-p(\Delta_T),i\in \{1,\ldots,\lfloor T\Delta_T^{-1}\rfloor\}\big)$$ where $\E[|Y_i|^{2p}]\leq 2^{2p}\E\big[\mathds{1}_{\{\mathbf{D}^{\Delta_T}X_i\ne0\}}^{2p}\big]\leq \mathfrak{C}_{p,\mathfrak{T}}\Delta_T$ and $\E[|Y_i|^2]\leq\mathfrak{C}_{\underline{\mathfrak{T}},\overline{\mathfrak{T}}}\Delta_T$. Rosenthal's inequality \eqref{eq Ros} gives \begin{align}\E\big[\|\widehat{p}_T-p(\Delta_T)&\|_{2p}^{2p}\big]\nonumber\\&\leq \mathfrak{C}_{p,\underline{\mathfrak{T}},\overline{\mathfrak{T}}}\lfloor T\Delta_T^{-1}\rfloor^{-{2p}}\big(\lfloor T\Delta_T^{-1}\rfloor\Delta_T+(\lfloor T\Delta_T^{-1}\rfloor\Delta_T)^{p}\big).\label{eq thm Tros}\end{align} It follows from \eqref{eq thm bound} and \eqref{eq thm Tros} that
\begin{align*}
\E\big[\|\widehat{f}^K_{T,\Delta_T}\big(\widehat{\vartheta}\big)-\widehat{f}^K_{T,\Delta_T}&\|_{L_p(\mathcal{D})}\big]^{1/p}\\&\leq \mathfrak{C} \Delta_T^{-1}\lfloor T\Delta_T^{-1}\rfloor^{-1}\big(T^{1/(2p)}+T^{1/2}\big),\end{align*} where $\mathfrak{C}$ depends on $(s,\pi,p,\mathfrak{M},\phi,\psi,\underline{\mathfrak{T}},\overline{\mathfrak{T}},K)$. We deduce for $p\geq 1$ \begin{align*}\underset{\vartheta\in [\underline{\mathfrak{T}},\overline{\mathfrak{T}}] }{\sup}\underset{f\in\mathcal{F}(s,{\pi},\mathfrak{M})}{\sup}\big(\E\big[\|\widehat{f}^K_{T,\Delta_T} \big(\widehat{\vartheta}\big)-\widehat{f}^K_{T,\Delta_T}&\|_{L_p(\mathcal{D})}^p\big]\big)^{1/p}\\&\leq \mathfrak{C}\big(T^{-(1-1/(2p))}+T^{-1/2}\big)\end{align*} where $\mathfrak{C}$ depends on $(s,\pi,p,\mathfrak{M},\phi,\psi,\underline{\mathfrak{T}},\overline{\mathfrak{T}},K)$ and which is negligible compared to $T^{-{\alpha(s,p,\pi)}}$ since ${\alpha(s,p,\pi)}\leq1/2$. The proof of Theorem \ref{thm Poisson} is now complete.

\section{Appendix}

\subsection{Proof of Proposition \ref{PropDefOperator}}

Let $x\in\R$, we have by stationarity of the increments of the process $X$
\begin{align*}
\PP(\mathbf{D}^{\Delta} X_{S_1}\leq x)&=\PP(X_\Delta\leq x|X_\Delta\ne0)\\&=\sum_{m=0}^\infty\PP(X_\Delta\leq x|R_\Delta=m,R_\Delta\ne0) \PP(R_\Delta=m) \\
&=\sum_{m=1}^\infty p_m(\Delta) \PP(X_\Delta\leq x|R_\Delta=m) \end{align*} where
$
\PP(X_\Delta\leq x|R_\Delta=m)=\int_{-\infty}^xf^{\star m}(y)dy$ for $m\geq1$. It follows
\begin{align*}
\PP(\mathbf{D}^{\Delta} X_{S_1}\leq x)&=\int_{-\infty}^x \mathbf{P}_\Delta[f](y)dy.
\end{align*}
Immediate computation give the expression of $p_m(\Delta)$. For the control of $p_1(\Delta)$ the assertion $p_1(\Delta)\leq1$ is immediate since $p_1(\Delta)$ is a probability. Moreover we have $$\exp\big(\vartheta\Delta\big)-1=\vartheta\Delta\Big(1+\vartheta\Delta\sum_{m=2}^\infty \frac{\big(\vartheta\Delta\big)^{m-2}}{m!}\Big),$$ where $$g(\Delta):=\sum_{m=2}^\infty \frac{\big(\vartheta\Delta\big)^{m-2}}{m!}=\frac{1}{\big(\vartheta\Delta\big)^2}\big(\exp\big(\vartheta\Delta\big)-1-\vartheta\Delta\big)\longrightarrow \frac{1}{2}\ \ \ \mbox{ as } \Delta\rightarrow0.$$ Since $g$ is continuous, there exists $\Delta_0>0$ such that for all $\Delta\leq\Delta_0$ we have $g(\Delta)\leq1.$ It follows for $\Delta\leq\Delta_0$ that \begin{align*}p_1(\Delta)&\geq \frac{1}{1+\vartheta\Delta}\geq1-\vartheta\Delta.
\end{align*}

\subsection{Proof of Lemma \ref{lem Inverse T}}

Let $\mathbf{F}[f]$ denote the Fourier transform of $f$ and take $h$ such that $h=\mathbf{P}_\Delta[f]$. Using the one-to-one mapping between densities and their Fourier transform we show the relation for the Fourier transforms. The linearity of the Fourier transform and the relation $\mathbf{F}[f\star g]=\mathbf{F}[f]\mathbf{F}[g]$ give
\begin{align*}\mathbf{F}[h]&=\mathbf{F}\big[\mathbf{P}_\Delta[f]\big]=\frac{1}{e^{\vartheta\Delta}-1}\sum_{m=1}^\infty \frac{\big(\vartheta\Delta\big)^m} {m!}\mathbf{F}[f]^m=\frac{\big(\exp(\vartheta\Delta\mathbf{F}[f])-1\big)}{e^{\vartheta\Delta}-1},\end{align*}
from which we deduce \begin{align*}\mathbf{F}[f]&=\frac{\log\big(1+(e^{\vartheta\Delta}-1)\mathbf{F}[h]\big)}{\vartheta\Delta} =\sum_{m=1}^\infty \frac{(-1)^{m+1}}{m}\frac{(e^{\vartheta\Delta}-1)^m}{\vartheta\Delta}\mathbf{F}[h]^m
\end{align*}
as $\big\|(e^{\vartheta\Delta}-1)\mathbf{F}[h]\big\|_\infty<\big\|e^{\vartheta\Delta}-1\big\|_\infty<1$ holds for $\Delta\leq \log 2$. We take the inverse Fourier transform of the equality to obtain the result.

\section*{Acknowledgements}
This work is a part of the author's Ph.D thesis under the supervision of Marc Hoffmann whom I would like to thanks for his valuable remarks on this paper. The author's research is supported by a PhD GIS Grant.

\end{document}